\newtheorem{thm}{Theorem}[section]
\newtheorem{lem}[thm]{Lemma}
\newtheorem{prop}[thm]{Proposition}
\theoremstyle{remark}
\newtheorem{rem}{Remark}[section]
\newcommand{\veps}{\varepsilon}
\newcommand{\wh}{\widehat}
\newcommand{\wt}{\widetilde}
\newcommand{\whn}{\wh{\theta}_{n}}
\newcommand{\wtn}{\wt{\theta}_{n}}
\newcommand{\whvn}{\wh{\vartheta}_n}
\newcommand{\hsp}{\hspace{0.5cm}}
\newcommand{\T}{^{\, T}}
\newcommand{\diag}{\textnormal{diag}}
\newcommand{\eqdef}{\, \overset{\Delta}{=}\, }
\newcommand{\ind}{\mathbbm{1}}
\newcommand{\correc}[1]{\textcolor{black}{#1}}
\newcommand{\dE}{\mathbb{E}}
\newcommand{\dP}{\mathbb{P}}
\newcommand{\dR}{\mathbb{R}}
\newcommand{\cA}{\mathcal{A}}
\newcommand{\cC}{\mathcal{C}}
\newcommand{\cF}{\mathcal{F}}
\newcommand{\cH}{\mathcal{H}}
\newcommand{\cN}{\mathcal{N}}
\newcommand{\cvgp}{~ \overset{p}{\longrightarrow} ~}
\newcommand{\cvgd}{~ \overset{d}{\longrightarrow} ~}
\newcommand{\limn}{\lim_{n\, \rightarrow\, +\infty}}
\def\leq{\leqslant}
\def\geq{\geqslant}
\numberwithin{equation}{section}
\begin{document}

\title[Extent of instability in nearly-unstable processes]{Testing for the extent of instability in nearly-unstable processes}

\author{Marie Badreau}
\address{Laboratoire Manceau de Mathématiques, Le Mans Université, Avenue Olivier Messiaen, 72085 LE MANS Cedex 09, France.}
\email{marie.badreau.etu@univ-lemans.fr}

\author{Fr\'ed\'eric Pro\"ia}
\address{Univ Angers, CNRS, LAREMA, SFR MATHSTIC, F-49000 Angers, France.}
\email{frederic.proia@univ-angers.fr}

\begin{abstract}
This paper deals with unit root issues in time series analysis. It has been known for a long time that unit root tests may be flawed when a series although stationary has a root close to unity. That motivated recent papers dedicated to autoregressive processes where the bridge between stability and instability is expressed by means of time-varying coefficients. The process we consider has a companion matrix $A_{n}$ with spectral radius $\rho(A_{n}) < 1$ satisfying $\rho(A_{n}) \rightarrow 1$, a situation described as `nearly-unstable'. The question we investigate is: given an observed path supposed to come from a nearly-unstable process, is it possible to test for the `extent of instability', \textit{i.e.} to test how close we are to the unit root? In this regard, we develop a strategy to evaluate $\alpha$ and to test for $\cH_0 : ``\alpha = \alpha_0"$ against $\cH_1 : ``\alpha > \alpha_0"$ when $\rho(A_{n})$ lies in an inner $O(n^{-\alpha})$-neighborhood of the unity, for some $0 < \alpha < 1$. Empirical evidence is given about the advantages of the flexibility induced by such a procedure compared to the common unit root tests. We also build a symmetric procedure for the usually left out situation where the dominant root lies around $-1$.
\end{abstract}

\maketitle

\begin{center}
\textbf{Keywords}: Nearly unstable autoregressive process, Unit root test, Martingales.
\textbf{Mathematics Subject Classification}: 62M10 - 62F12 - 62F03 - 60G52 - 60G42.
\end{center}

\section{Introduction and Motivation}
\label{SecIntro}

This paper is devoted to the well-known unit root issue, still at the heart of many concerns in finance, econometrics, and more generally in time series analysis, at the origin, when undetected, of spurious regressions through unreliable estimates. In order to investigate the stochastic non-stationarity of an observed autoregressive path, two strategies are frequently encountered. The first one consists in estimating the coefficient behind the (supposed) unit root, and the second one directly looks for a hidden random walk in the disturbance. In a very particular case, \citet{DickeyFuller79} first tackled the unit root issue by considering an AR$(1)$ model with a linear trend and concluded that, when there is a unit root in the generating process, the least-squares estimator of the autoregressive parameter has an asymmetric distribution that can be written asymptotically as a functional of a family of detrended Wiener processes, actually depending on the trend. Such a result had been anticipated in the Gaussian case by \citet{White58} in the absence of trend and, among numerous other studies, \citet{ChanWei88} improved the assumptions on the noise by considering a sequence of martingale differences (with sufficient moments). The behavior of the associated $t$-statistic has been tabulated by \citet{DickeyFuller79} for testing purposes, see also \citet{MacKinnon91}. The \textit{ADF test} (Augmented Dickey-Fuller) is probably the most commonly used nowadays to evaluate the presence of a unit root in a causal ARMA$(p, q)$ process. It was formalized by \citet{DickeySaid81} when $p$ and $q$ are known, and by \citet{DickeySaid84} under a truncated approximation of the AR$(\infty)$ expression, therefore working for any $p$ and $q$. At the basis of their study is the following differentiated form of an AR$(k)$ process with parameter $\theta$,
\begin{equation}
\label{ModARDiff}
\forall\, n \geq 1, \hsp \Delta X_n = (\theta_0 - 1) X_{n-1} + \sum_{i=1}^{k-1} \delta_i\, \Delta X_{n-i} + \veps_n
\end{equation}
where \correc{$(\veps_n)$ is a white noise sequence with finite variance}, $\delta_i = -(\theta_{i+1} + \hdots + \theta_k)$ for $1 \leq i \leq k-1$ and $\theta_0 = \theta_1 + \hdots + \theta_k$, so that $(\Delta X_n)$ is stationary under the null hypothesis of unit root $\cH_0 : ``\theta_0=1"$ (actually trend-stationary if a trend is added in the model). That was an inspiration for us, and we will find a similar reasoning in this paper. The asymptotic behavior of the estimate is still the one of the seminal work of \citet{DickeyFuller79} as long as $k = O(n^{1/3})$ but the power of the test is impacted and, as it is explained by \citet{Schwert89} or by \citet{NgPerron95}, some distortions may occur for badly truncated processes. Let us also mention the non-parametric approach suggested by \citet{Phillips87b} and deepened by \citet{PhillipsPerron88} in which all correlation phenomenon are postponed to the noise of the model, which is now supposed to be strongly mixing. Overall there is an abundant literature on unit roots tests, we have just summarized here some important topics. The interested reader will find in-depth studies for example in \citet{Bhargava86}, \citet{Perron88}, \citet{SchmidtPhillips92}, etc. From a practical point of view, \citet{NelsonPlosser82} had already highlighted the presence of unit roots in a number of macroeconomic series \textit{via} the Dickey-Fuller strategy. Although not directly related to this paper, we found it useful to mention the complementary approach consisting in testing for trend-stationarity instead of integration. The noise is now assumed to include a random walk whose variance is zero under the null hypothesis and among such procedures, the most famous seem to be the \textit{KPSS test} of \citet{KwiatkowskiEtAl92} and the quite similar \textit{LMC test} of \citet{LeybourneMcCabe94}. Here again there is an ample literature but too far from our scope to dwell on it, let us instead return to unit roots. 

\smallskip

In the context of autoregressive processes, the substantial works of \citet{LaiWei83} and \citet{ChanWei88} helped to establish that the least-squares estimator is strongly consistent wherever the characteristic roots lie, but with very different convergence rates and limit distributions whether the process is stationary (stable), integrated (unstable), or even explosive. However, \citet{DeJongEtAl92} observed that the unit root tests may be empirically less powerful than their stable counterparts when, although very close to it, the spectral radius remains less than 1, and some econometricians are not fully convinced by the compatibility between the theoretical impacts of this discontinuity -- from $n^{1/2}$-normality to $n-$asymmetry while the spectral radius continuously moves towards the unit root -- and real-word data. These are strong arguments in favor of attempts to bridge stability and instability that motivated numerous studies devoted to intermediate models, including ours. Most of them involve a first-order process with time-varying coefficients $(X_{n,\, k})$, generated by the triangular form
\begin{equation}
\label{Intro_NewMod}
\forall\, n \geq 1,\, \forall\, 1 \leq k \leq n, \hsp X_{n,\, k} = \theta_n\, X_{n,\, k-1} + \veps_k.
\end{equation}
This is an easy-to-use relevant case to understand the underlying dynamics since to focus on the inner neighborhood of the unit root, we may only consider a sequence of coefficients satisfying $\vert \theta_n \vert < 1$ for all $n \geq 1$ but $\vert \theta_n \vert \rightarrow 1$, so that the model is always stable but increasingly close to instability. \citet{ChanWei87}, \citet{Phillips87a}, \citet{GiraitisPhillips06} and later \citet{PhillipsMagdalinos07} were interested in such a model and, among other conclusions, it was established that the least-squares estimator is asymptotically normal around $\theta_n$ with rate $(n v_n)^{1/2}$ when $\theta_n = 1 - c/v_n$, for any $1 \ll v_n \ll n$ and $c > 0$. The junction is not perfect -- there is still a discontinuity in the asymptotic variance when $v_n=1$ and in the asymptotic distribution when $v_n=n$ -- but these are promising results. Indeed, by setting $v_n = n^{\alpha}$ for some $0 < \alpha < 1$, the gap between stability and instability is filled with rates of the form $n^{1/2} \ll n^{(1+\alpha)/2} \ll n$. The studies of \citet{VanDerMeerEtAl99}, \citet{Park03}, \citet{BuchmannChan13}, or the more recent ones of \citet{Proia20} and \citet{BadreauProia2023} aimed at finding a kind of generalization to nearly-unstable AR$(p)$ processes. In the latter, the process $(\Phi_{n,\, k})$ is assumed to follow the VAR$_p(1)$ triangular form
\begin{equation}
\label{ModVar}
\forall\, n \geq 1,\, \forall\, 1 \leq k \leq n, \hsp \Phi_{n,\, k} = A_{n}\, \Phi_{n,\, k-1} + E_k
\end{equation}
where $(E_k)$ is a $p$-vectorial noise and
\begin{equation}
\label{CompMat}
A_{n} = \begin{pmatrix}
\theta_{n,\, 1} & \theta_{n,\, 2} & \hdots & \theta_{n,\, p} \\
\ddots & & & 0 \\
& I_{p-1} & & \vdots \\
& & \ddots & 0
\end{pmatrix}
\end{equation}
is the associated $p \times p$ companion matrix with spectral radius satisfying $\rho(A_n) < 1$ for all $n \geq 1$ and $\rho(A_n) \rightarrow 1$. Provided suitable assumptions of moments, the least-squares estimator is still shown to be consistent and asymptotically normal around $\theta_n$ with mixed rates (see Thm. 3 and 4), and it is deduced as a corollary that, when $\rho(A_n) = 1 - c/v_n$,
\begin{equation}
\label{NormCombLin}
\sqrt{n\, v_n}\, \big\langle L_n,\, \whn - \theta_n \big\rangle \cvgd \cN\big( 0,\, h_0^2 \big) \hsp \text{with} \hsp L_n\T = \left( \lambda_1^{i-1} \rho(A_n)^{1-i} \right)_{1\, \leq\, i\, \leq\, p}
\end{equation}
where $\lambda_1 = \pm 1$ (depending on the sign of the unit root in the unstable limit process) and the asymptotic variance $h_0^2 > 0$ is given as a function of $c$ and the eigenvalues of $A$, the limit matrix of $A_n$. However, although this result and the ones that may be found in the references above are very useful for understanding the behavior of the process and the estimator in the inner neighborhood of the unit circle, they definitely lack practical application. \correc{Beyond the scope of this study, the concept of near-instability has also been defined for first-order INAR processes (AR with integer values), see \textit{e.g.} \citet{IspanyEtAl03}. In that case, the mean value of the counting sequence is assumed to converge to one, that is to the unstable setting of an INAR(1) process, and some kind of bridges are built between the stable and unstable rates. An application to nearly-critical Galton-Watson processes can be found in the same reference or more deeply in \citet{IspanyEtAl05}. Let us also mention the study of \citet{BarretoChan24} related to nearly-unstable INARCH(1) processes, still dealing with counting time series but in a heteroscedastic context. The reader is referred to the large bibliography on the topic given in this very recent article.}

\smallskip

The question we wish to investigate in this paper is the following: given an observed path supposed to come from a nearly-unstable process (\textit{e.g.} doubtful ADF/KPSS diagnoses), is it possible to test for the `extent of instability', \textit{i.e.} to admit that the process is stable but to test how close we are to the unit root? Setting $\rho(A_n) = 1-c/v_n$ with $v_n = n^{\alpha}$, the question is clearly related to the choice of $\alpha$ and may reduce to a test of $\cH_0 : ``\alpha = \alpha_0"$ against $\cH_1 : ``\alpha > \alpha_0"$ for some test value $\alpha_0$. In addition, evaluating $\alpha$ is likely to provide adjustments in the confidence intervals associated with the autoregressive parameters. This is an innovative approach of the unit root issue (to the best of our knowledge), especially from a practical point of view. In Section \ref{SecTest}, we give some specific features of the model and the hypotheses before developing the test procedure just mentioned. We will see that, as a corollary of our reasonings, we obtain a version of the asymptotic normality for a corrected statistic closely related to the least-squares estimation that does not need a mixing of the rates matrix, actually deepening \citet[Thm. 4]{BadreauProia2023}. An empirical study is provided in Section \ref{SecEmp}, first on simulated data and then on real time series  -- the famous dataset coming from \citet{NelsonPlosser82} -- in order to assess the performances of our procedure and justify its utility. After a short conclusion in Section \ref{SecConclu}, we finally prove our results in Section \ref{SecProofs}.

\section{A (Quasi-)Unit Root Test}
\label{SecTest}

Let us consider the triangular model \eqref{ModVar}, where
\begin{equation*}
\forall\, n \geq 1,\, \forall\, 1 \leq k \leq n, \hsp \Phi\T_{n,\, k} = \begin{pmatrix} X_{n,\, k} & \cdots & X_{n,\, k-p+1} \end{pmatrix} \hsp \text{and} \hsp E\T_k = \begin{pmatrix} \veps_k & 0 & \cdots & 0 \end{pmatrix}.
\end{equation*}
The process $(\veps_k)$ is assumed to be a white noise with finite variance $\sigma^2 > 0$, and the arbitrary initial value $\Phi_{n,\, 0}$ is square-integrable and independent of the noise. The companion matrix $A_n$ of the process is given in \eqref{CompMat} and has spectrum $\{ \lambda_{n,\, i}\}_{1\, \leq\, i\, \leq\, p}$ where $\lambda_{n,\, 1}$ is the one with largest modulus (the complex eigenvalues are sorted according to their modulus, in descending order, with ties broken by lexicographic order, also in descending order). The following technical hypotheses are intended to reflect our understanding of `near instability'.

\begin{enumerate}[label=(\correc{A$\arabic*$})]
	\item \label{HypCA} \textit{Convergence of the companion matrix}. There exists a $p \times p$ matrix $A$ such that
	\begin{equation*}
		\limn A_n = A
	\end{equation*}
	with distinct eigenvalues $0\, <\, \vert \lambda_{p} \vert\, \leq\, \ldots\, \leq\, \vert \lambda_2 \vert\, \leq\, \vert \lambda_{1} \vert\, =\, \rho(A) = 1$, and the top-right element of $A$ is non-zero ($\theta_p \neq 0$).
	\item \label{HypUR} \textit{Number of unit roots}. There is exactly one unit root in $A$ ($\lambda_1 = \pm 1$ but $\vert \lambda_2 \vert < 1$).
	\item \label{HypSR} \textit{Spectral radius of the companion matrix}. The spectral radius of $A_n$ is given by
	\begin{equation*}
		\forall\, n \geq 1, \hsp \rho(A_n) = 1 - \frac{c}{n^{\alpha}}
	\end{equation*}
	for some $c > 0$ and $0 < \alpha < 1$.
\end{enumerate}

For any $d \geq 1$, the usual $d \times d$ matrices
\begin{equation}
	\label{MatK}
	I_d = \begin{pmatrix} 1 & 0 & \hdots \\
		0 & \ddots & \\
		\vdots & & 1
	\end{pmatrix} \hsp \text{and} \hsp K_d = \begin{pmatrix} 1 & 0 & \hdots \\
		0 & 0 & \\
		\vdots & & \ddots
	\end{pmatrix}
\end{equation}
will be frequently encountered and the first vector of the canonical basis of $\dR^{d}$ will be denoted by $e_d\T = (1,\, 0,\, \ldots,\, 0)$. For readability purposes, we prove the results as if the limit process had a positive unit root (that is, $\lambda_1 = 1$), so that $\lambda_{n,\, 1}$ is equal (to be rigorous, this is true only for $n$ sufficiently large) to the spectral radius $\rho_n = \rho(A_n)$ of the companion matrix. Nevertheless they remain valid for $\lambda_1 = -1$, at the cost of some adjustments that will be clarified alongside (see Section \ref{SecNegRoot}). Technically, we should assume in addition that $0 < c < n^{\alpha}$ to ensure that for all $n$, $0 < \rho_n < 1$. Since our results are asymptotic w.r.t. $n$, we will allow ourselves a slight abuse of notation and simply set $c > 0$. The parameters $c$ and $\alpha$ have a very different role to play: $c$ is a \textit{variance parameter} and is clearly less important than $\alpha$, which is a \textit{rate parameter}. \correc{In fact, the over-parametrization issued from the ratio $c/n^{\alpha}$ makes it impossible to dissociate $c$ from $\alpha$ without additional constraints, which is the reason why we first assume in Section \ref{SecTestKnown} that $c$ is known to focus on $\alpha$. However, a procedure will also be detailed in Section \ref{SecTestUnknown} for a joint evaluation of $c$ and $\alpha$, according to a certain empirical protocol.} First, like \citet{Park03}, it is convenient to express the process under the hierarchical form
\begin{equation}
\label{ModHiera}
\forall\, n \geq 1,\, \forall\, 1 \leq k \leq n, \hsp \left\{
\begin{array}{lcl}
    X_{n,\, k} & = & \lambda_{n,\, 1}\, X_{n,\, k-1} + V_{n,\, k} \\
	V_{n,\, k} & = & \beta_{n,\, 1}\, V_{n,\, k-1} + \ldots + \beta_{n,\, p-1}\, V_{n,\, k-p+1} + \veps_k
\end{array}
\right.    
\end{equation}
where we merely set $V_{n,\, k} = \veps_k$ if $p=1$. As it can be easily seen by factoring the autoregressive polynomial of $(X_{n,\, k})$ by $(1-\lambda_{n,\, 1} z)$, the companion matrix $\bar{A}_n$ associated with $(V_{n,\, k})$ has spectrum $\{ \lambda_{n,\, i} \}_{2\, \leq\, i\, \leq\, p}$. Under \ref{HypCA} and \ref{HypUR}, it is therefore a stable AR$(p-1)$ process with time-varying coefficients $\beta_{n,\, i} \rightarrow \beta_i$, such that $\bar{A}_n \rightarrow \bar{A}$ and $\rho(\bar{A}_n) \rightarrow \rho(\bar{A}) = \vert \lambda_2 \vert < 1$. So the limit process is itself stable and its (normalized) covariance matrix is given by
\begin{equation}
\label{DefSigma}
\Sigma_{p-1} = \sum_{k\, \geq\, 0} \bar{A}^{\, k}\, K_{p-1}\, (\bar{A}\T)^k.
\end{equation}
But that also leads to another representation of the process, closely related to \eqref{ModARDiff} mentioned in the introduction,
\begin{equation}
\label{ModDiff}
\forall\, n \geq 1,\, \forall\, 1 \leq k \leq n, \hsp X_{n,\, k} = \lambda_{n,\, 1}\, X_{n,\, k-1} + \sum_{i=1}^{p-1} \beta_{n,\, i} \left( X_{n,\, k-i} - \lambda_{n,\, 1}\, X_{n,\, k-i-1} \right) + \veps_k
\end{equation}
from which we will benefit in our following results. In consequence, it is straightforward to see that $\theta_n\T = ( \theta_{n,\,1},\, \ldots,\, \theta_{n,\,p})$ and $\beta_n\T = ( \beta_{n,\, 1},\, \ldots,\, \beta_{n, \,p-1} )$ are such that
\begin{equation}
\label{DefJn}
\forall n \geq 1, \hsp \theta_n = J_n \, \beta_n + \lambda_{n, \,1} \, e_p \hsp \text{where} \hsp
	J_n = 
	\begin{pmatrix}
		1 & &\\
		- \lambda_{n,\, 1} & \ddots &\\
		& \ddots &  1 \\
		& & - \lambda_{n,\, 1}
	\end{pmatrix} \hsp (= 0 ~\text{if $p=1$})
\end{equation}
is a rectangular matrix of size $p \times (p-1)$. To emphasize that the latent variables $V_{n,\, k}$ depend on $\alpha$, from now on we will use $V_{n,\, k}(\alpha)$ instead and we will do the same for all confusing notation. Accordingly, let
\begin{equation*}
\forall\, n \geq 1,\, \forall\, 1 \leq k \leq n, \hsp \Psi_{n,\, k}\T(\alpha) = \begin{pmatrix} X_{n,\, k} & V_{n,\, k}(\alpha) & \cdots & V_{n,\, k-p+2}(\alpha) \end{pmatrix}
\end{equation*}
where the arbitrary initial value $\Psi_{n,\, 0}(\alpha)$ is square-integrable, so that from \eqref{ModDiff},
\begin{equation}
\label{ModVarTheta}
\forall\, n \geq 1,\, \forall\, 1 \leq k \leq n, \hsp X_{n,\, k} = \vartheta_n\T(\alpha)\, \Psi_{n,\, k-1}(\alpha) + \veps_k
\end{equation}
where $\vartheta_n\T(\alpha) = ( \lambda_{n,\, 1},\, \beta_{n,\, 1},\, \ldots,\, \beta_{n, \,p-1} )$. Let also
\begin{equation}
\label{DefSnTn}
S_{n,\, n} = \sum_{k=0}^n \Phi_{n,\, k}\, \Phi_{n,\, k}\T \hsp \text{and} \hsp T_{n,\, n}(\alpha) = \sum_{k=0}^n \Psi_{n,\, k}(\alpha)\, \Psi_{n,\, k}\T(\alpha)
\end{equation}
together with the rate matrix of size $p \times p$, that shall help to control the convergence of $S_{n,\, n}$ and $T_{n,\, n}(\alpha)$, given by
\begin{equation}
\label{RateCn}
C_n(\alpha) = \diag\left( \frac{1}{1 - \rho_n(\alpha)},\, 1,\, \ldots,\, 1 \right).
\end{equation}
The sum of squares $\sum_k \Vert E_k \Vert^2$ in \eqref{ModVar} is obviously minimized by
\begin{equation}
\label{OLS}
\whn = S_{n,\, n-1}^{\, -1} \sum_{k=1}^n \Phi_{n,\, k-1}\, X_{n,\,k}
\end{equation}
whereas in \eqref{ModVarTheta}, it is minimized (as a function of $\alpha$), by
\begin{equation}
\label{OLS2}
\whvn(\alpha) = T_{n,\, n-1}^{\, -1}(\alpha) \sum_{k=1}^{n} \Psi_{n,\, k-1}(\alpha)\, X_{n,\, k}.
\end{equation}
If $p=1$, then $\whvn(\alpha)=\whvn$ coincides with $\whn$. But before coming to the study of the rate parameter $\alpha$, it should be noted that as an interesting consequence of our study, it is now possible to build a consistent and asymptotically normal estimator of $\theta_n$. On the basis of \eqref{DefJn}, let
\begin{equation}
\label{OLS3}
\wtn = \wh{J}_n\, \wh{\beta}_n + \wh{v}_n\, e_p
\end{equation}
where $\wh{J}_n$ is $J_n$ with $\lambda_{n,\, 1}$ that can be estimated by $\wh{v}_n = e_p\T\, \whvn$, the first component of $\whvn$, and $\wh{\beta}_n$ is $\whvn$ deprived of its first component (we drop the dependency on $\alpha$ in the notation of this paragraph, for readability). Then, we have the following result.
\begin{prop}
\label{PropCvgNormOLS3}
Assume that \ref{HypCA}, \ref{HypUR} and \ref{HypSR} hold, and that $\dE[\vert \veps_1 \vert^{\, 2+\nu}] = \eta_{\nu} < +\infty$ for some $\nu > 0$. Then,
\begin{equation}
\label{CvgNormOLS3}
\big\Vert \wtn - \theta_n \big\Vert \cvgp 0 \hsp \text{and} \hsp \sqrt{n}\, \big( \wtn - \theta_n \big) \cvgd \cN\big( 0,\, J\, \Sigma_{p-1}^{-1} J\T \big)
\end{equation}
where $\Sigma_{p-1}$ is given in \eqref{DefSigma} and
\begin{equation*}
J = \begin{pmatrix}
		1 & &\\
		-\lambda_1 & \ddots &\\
		& \ddots &  1 \\
		& & -\lambda_1
	\end{pmatrix} \hsp (= 0 ~\text{if $p=1$}).
\end{equation*}
\end{prop}
\begin{proof}
See Section \ref{SecProofCorOLS3}.
\end{proof}

The asymptotic normality is degenerate in the sense that $J\, \Sigma_{p-1}^{-1} J\T$ is not invertible (as a $p \times p$ matrix of rank at most $p-1$). This may be illustrated by the relation $v_p\T\, J = 0$ where $v_p\T = (\lambda_1^{i-1})_{1\, \leq\, i\, \leq\, p}$ so that \eqref{CvgNormOLS3} becomes
\begin{equation*}
\sqrt{n}\, v_p\T \big( \wtn - \theta_n \big) \cvgp 0
\end{equation*}
and a parallel can be made with \eqref{NormCombLin}. It is not surprising for $p=1$ since it is well-known that in that case, $\vert \wtn - \theta_n \vert = O_p(n^{-(1+\alpha)/2})$, see \textit{e.g.} \citet{PhillipsMagdalinos07}.

\begin{rem}
Strictly speaking, $\wtn$ -- as well as $\whn$ -- is a consistent estimation of $\theta$, the limit of $\theta_n$, for obviously $\Vert \wtn - \theta \Vert\, \leq\, \Vert \wtn - \theta_n \Vert + \Vert \theta_n - \theta \Vert$. But it can only be asymptotically normal under the stronger hypothesis $\Vert \theta_n - \theta \Vert = o_p(n^{-1/2})$, which requires in particular that $\alpha > 1/2$. In this case, an immediate consequence of \eqref{CvgNormOLS3} is
\begin{equation}
\label{NormOLS3}
\sqrt{n}\, \big( \wtn - \theta \big) \cvgd \cN\big( 0,\, J\, \Sigma_{p-1}^{-1} J\T \big).
\end{equation}
\end{rem}

\subsection{\correc{The variance parameter $c$ is known}}
\label{SecTestKnown}

After this contextualization, we are now ready to define a statistic that will be useful to achieve our objectives (recall that $\lambda_1=\pm 1$ means that the limit process has a unit root located at $\pm 1$). \correc{Suppose first that the variance parameter $c$ is known and consider, provided that $\lambda_1 \wh{v}_n(\alpha) < 1$, the statistic}
\begin{equation}
\label{EstAlpha}
\forall\, n > 1,\, \hsp \wh{\alpha}_n(\alpha) = \frac{\ln c - \ln\big( 1 - \lambda_1 \wh{v}_n(\alpha) \big)}{\ln n}
\end{equation}
where $\wh{v}_n(\alpha) = e_p\T\, \whvn(\alpha)$ is the first component of $\whvn(\alpha)$. \correc{Despite appearances, $\wh{\alpha}_n(\alpha)$ is \textit{not} an estimator of $\alpha$ because it actually depends on $\alpha$. It is a statistic whose behavior, as a function of $\alpha$, will help to formalize the test.}

\begin{thm}
\label{ThmNormAlpha}
Assume that \ref{HypCA}, \ref{HypUR} and \ref{HypSR} hold, and that $\dE[\vert \veps_1 \vert^{\, 2+\nu}] = \eta_{\nu} < +\infty$ for some $\nu > 0$. Then, for the `true' values $0 < \alpha < 1$ and $c > 0$,
\begin{equation}
(\ln n)\, \sqrt{n^{1-\alpha}} \, \big( \wh{\alpha}_n(\alpha) - \alpha) \cvgd \cN\big( 0,\, 2\, c^{-1} \pi_{11}^{-2} \big)
\end{equation}
where
\begin{equation}
\label{DefPi11}
    \pi_{11} = \frac{\lambda_1^{\, p-1}}{\prod_{j=2}^p (1 - \lambda_j)} \hsp (= 1 ~\text{if $p=1$}).
\end{equation}
\end{thm}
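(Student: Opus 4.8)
The plan is to linearise the statistic so as to reduce the claim to the asymptotic normality of the first coordinate $\wh v_n(\alpha)$ of $\whvn(\alpha)$ around $\lambda_{n,\,1}$, and then to establish that normality directly from the least‑squares representation \eqref{OLS2}. \emph{Linearisation.} Take $\lambda_1 = 1$ as in Remark \ref{RemParam}, so that $\lambda_{n,\,1} = \rho_n$ and $1-\lambda_1\lambda_{n,\,1} = 1-\rho_n = c\,n^{-\alpha}$; then $\alpha\ln n = \ln c - \ln(1-\lambda_1\lambda_{n,\,1})$, whence
\[
\wh\alpha_n(\alpha) - \alpha = \frac{1}{\ln n}\Big(\ln\big(1-\lambda_1\lambda_{n,\,1}\big) - \ln\big(1-\lambda_1\wh v_n(\alpha)\big)\Big).
\]
I would Taylor‑expand $x\mapsto\ln(1-\lambda_1 x)$ at $\lambda_{n,\,1}$, multiply by $(\ln n)\sqrt{n^{1-\alpha}}$, and use $n^\alpha\sqrt{n^{1-\alpha}} = \sqrt{n\,v_n}$ with $v_n = n^\alpha$, getting
\[
(\ln n)\sqrt{n^{1-\alpha}}\,\big(\wh\alpha_n(\alpha)-\alpha\big) = \frac{\lambda_1}{c}\,\sqrt{n\,v_n}\,\big(\wh v_n(\alpha)-\lambda_{n,\,1}\big) + R_n,
\]
with Lagrange remainder $R_n = \tfrac12(1-\lambda_1\xi_n)^{-2}\sqrt{n^{1-\alpha}}\,(\wh v_n(\alpha)-\lambda_{n,\,1})^2$ for some random $\xi_n$ between $\lambda_{n,\,1}$ and $\wh v_n(\alpha)$. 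Granting the normality of Step 2, $\wh v_n(\alpha)-\lambda_{n,\,1} = O_{\dP}((n\,v_n)^{-1/2}) = O_{\dP}(n^{-(1+\alpha)/2})$, which for $\alpha<1$ is $o_{\dP}(n^{-\alpha}) = o_{\dP}(1-\rho_n)$; hence $1-\lambda_1\xi_n\sim c\,n^{-\alpha}$, $(1-\lambda_1\xi_n)^{-2} = O_{\dP}(n^{2\alpha})$, and $R_n = O_{\dP}\big(n^{2\alpha}\,n^{(1-\alpha)/2}\,n^{-(1+\alpha)}\big) = O_{\dP}(n^{(\alpha-1)/2}) = o_{\dP}(1)$. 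The same bound shows $1-\lambda_1\wh v_n(\alpha) = c\,n^{-\alpha}+o_{\dP}(n^{-\alpha})>0$ with probability tending to one, so $\wh\alpha_n(\alpha)$ is eventually well defined, and it remains to find the limit law of $\sqrt{n\,v_n}\,(\wh v_n(\alpha)-\lambda_{n,\,1})$.

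\emph{Asymptotic normality of $\wh v_n(\alpha)$.} From \eqref{OLS2}--\eqref{ModVarTheta}, $\wh v_n(\alpha)-\lambda_{n,\,1} = e_p\T\,T_{n,\,n-1}^{\,-1}(\alpha)\sum_{k=1}^n\Psi_{n,\,k-1}(\alpha)\veps_k$. The structural fact I would exploit, visible in the hierarchical form \eqref{ModHiera}, is that the first coordinate $X_{n,\,k}$ of $\Psi_{n,\,k}(\alpha)$ is a nearly‑unstable AR$(1)$ driven by the stable, asymptotically stationary process $(V_{n,\,k}(\alpha))$, while the other coordinates are $O_{\dP}(1)$ — which is exactly what the \emph{diagonal} rate matrix $C_n(\alpha)$ of \eqref{RateCn} accounts for. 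With $D_n = \sqrt n\,C_n^{1/2}(\alpha) = \diag\big(\sqrt{n/(1-\rho_n)},\,\sqrt n,\,\ldots,\,\sqrt n\big)$ I would prove that
\[
D_n^{-1}\,T_{n,\,n-1}(\alpha)\,D_n^{-1} \cvgp \Lambda \eqdef \begin{pmatrix}\ell & 0 \\ 0 & \sigma^2\,\Sigma_{p-1}\end{pmatrix},\qquad \ell = \frac{\sigma^2\,\pi_{11}^2}{2},
\]
the cross‑block $\tfrac{\sqrt{1-\rho_n}}{n}\sum_k X_{n,\,k}V_{n,\,k-j}(\alpha)$ being $O_{\dP}\big(\sqrt{1-\rho_n}\big)\to 0$, the lower block converging to $\sigma^2\Sigma_{p-1}$ by stationarity of the limit of $(V_{n,\,k}(\alpha))$ (cf. \eqref{DefSigma}), and $\ell$ the (deterministic, since $\alpha<1$) limit of $\tfrac{1-\rho_n}{n}\sum_k X_{n,\,k}^2$, equal to $\tfrac12$ times the long‑run variance $\sigma^2/\big(\prod_{j=2}^p(1-\lambda_j)\big)^2 = \sigma^2\pi_{11}^2$ of $(V_{n,\,k}(\alpha))$ — here one uses that the autoregressive polynomial of $(V_{n,\,k})$ has roots $\lambda_j^{-1}$, $j\ge 2$, so its value at $1$ is $\prod_{j=2}^p(1-\lambda_j)$, cf. \eqref{DefPi11}. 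In parallel $\big(\sum_{k=1}^m\Psi_{n,\,k-1}(\alpha)\veps_k\big)_m$ is a martingale with predictable bracket $\sigma^2 T_{n,\,n-1}(\alpha)$, and $\dE[|\veps_1|^{2+\nu}]<+\infty$ (together with $\max_{k\le n}\Vert D_n^{-1}\Psi_{n,\,k-1}(\alpha)\Vert\cvgp 0$) gives the Lindeberg condition, so the martingale CLT yields $D_n^{-1}\sum_{k=1}^n\Psi_{n,\,k-1}(\alpha)\veps_k\cvgd\cN(0,\sigma^2\Lambda)$. Combining,
\[
D_n\big(\whvn(\alpha)-\vartheta_n(\alpha)\big) = \big(D_n^{-1}T_{n,\,n-1}(\alpha)D_n^{-1}\big)^{-1}\Big(D_n^{-1}\sum_{k=1}^n\Psi_{n,\,k-1}(\alpha)\veps_k\Big) \cvgd \cN\big(0,\sigma^2\Lambda^{-1}\big),
\]
and the first coordinate (with $(D_n)_{11} = \sqrt{n\,v_n/c}$ and $(\vartheta_n(\alpha))_1 = \lambda_{n,\,1}$) gives $\sqrt{n\,v_n/c}\,(\wh v_n(\alpha)-\lambda_{n,\,1})\cvgd\cN(0,\sigma^2/\ell) = \cN(0,2\pi_{11}^{-2})$, i.e. $\sqrt{n\,v_n}\,(\wh v_n(\alpha)-\lambda_{n,\,1})\cvgd\cN(0,\,2c\,\pi_{11}^{-2})$.

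\emph{Conclusion.} Plugging this into the identity of the first paragraph and invoking Slutsky's lemma,
\[
(\ln n)\sqrt{n^{1-\alpha}}\,\big(\wh\alpha_n(\alpha)-\alpha\big) = \frac{\lambda_1}{c}\,\sqrt{n\,v_n}\,\big(\wh v_n(\alpha)-\lambda_{n,\,1}\big) + o_{\dP}(1) \cvgd \cN\Big(0,\ \frac{\lambda_1^2}{c^2}\cdot 2c\,\pi_{11}^{-2}\Big) = \cN\big(0,\ 2\,c^{-1}\pi_{11}^{-2}\big),
\]
since $\lambda_1^2 = 1$. For $\lambda_1 = -1$ one has $\lambda_{n,\,1} = -\rho_n$ but still $1-\lambda_1\lambda_{n,\,1} = 1-\rho_n = c\,n^{-\alpha}$, and $\pi_{11}^2 = \big(\prod_{j=2}^p(1-\lambda_j)\big)^{-2}$ is unchanged, so the limit is identical; the systematic adjustments are collected in Section \ref{SecNegRoot}.

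The step I expect to be the real work is the one establishing $D_n^{-1}T_{n,\,n-1}(\alpha)D_n^{-1}\cvgp\Lambda$ with the precise block structure and the constant $\ell = \sigma^2\pi_{11}^2/2$ — in particular, that because $\alpha<1$ (hence $v_n\ll n$) the normalised sum of squares of the unstable coordinate concentrates at a deterministic limit rather than at a Brownian functional, and that the cross‑terms with the stable coordinates are asymptotically negligible — together with the Lindeberg verification for the martingale CLT. This refines, in the differentiated parametrisation \eqref{ModDiff}--\eqref{ModVarTheta}, the normality behind \eqref{NormCombLin}: working with $\Psi_{n,\,k}(\alpha)$ rather than $\Phi_{n,\,k}$ isolates the unstable direction, so $C_n(\alpha)$ is a plain diagonal and no mixing of the rates is required. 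The remaining steps are a routine delta method, whose only delicate feature is that $x\mapsto\ln(1-\lambda_1 x)$ has a derivative blowing up near $\pm1$; this is harmless precisely because the estimation error $O_{\dP}(n^{-(1+\alpha)/2})$ is of smaller order than the gap $1-\rho_n = c\,n^{-\alpha}$ exactly when $\alpha<1$.
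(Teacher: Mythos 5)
Your proposal is correct and follows essentially the same route as the paper: a delta-method linearisation of $x\mapsto\ln(1-\lambda_1 x)$ controlled by the fact that $\wh v_n(\alpha)-\lambda_{n,\,1}=O_{\dP}(n^{-(1+\alpha)/2})=o_{\dP}(1-\rho_n)$ for $\alpha<1$, reduced to the asymptotic normality of the first coordinate of $\whvn(\alpha)$, which the paper establishes exactly as you outline (Lemmas \ref{LemCvgTn} and \ref{LemCvgVarTheta}: block-diagonal convergence of $C_n^{-1/2}(\alpha)\,T_{n,\,n}(\alpha)\,C_n^{-1/2}(\alpha)/n$ to $\Upsilon_p$, then a martingale CLT with Lindeberg's condition checked via the $(2+\nu)$-moment). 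The one step you leave as an assertion — that the cross-block $\tfrac{\sqrt{1-\rho_n}}{n}\sum_k X_{n,\,k}V_{n,\,k-j}(\alpha)$ vanishes, which Cauchy--Schwarz alone only bounds by $O_{\dP}(1)$ — is precisely where the paper invests its effort, via the identity \eqref{DefNonDiagBlocks} and the structural cancellation $P\,K_p\,H_p\,\Delta_p=0$; you correctly flag this as the real work.
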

\begin{proof}
See Section \ref{SecProofThmAlpha}.
\end{proof}

Now choose some $\alpha/2 < \alpha_0 < 1$ (in particular, any $1/2 \leq \alpha_0 < 1$ is suitable), and consider the test statistic
\begin{equation}
\label{DefStatTest}
Z_n^{\, 2}(\alpha_0) = \frac{c\, \wh{\pi}_n^{\, 2}}{2}\, (\ln n)^2\, n^{1-\alpha_0}\, \big( \wh{\alpha}_n(\alpha_0) - \alpha_0 \big)^2 
\end{equation}
where
\begin{equation*}
\wh{\pi}_n = \frac{\lambda_1^{\, p-1}}{\prod_{i=2}^p (1 - \wh{\lambda}_{n,\, i})} \hsp (= 1 ~\text{if $p=1$})
\end{equation*}
and $\wh{\lambda}_{n,\, i}$ is the $i$-th eigenvalue of the estimated companion matrix $\wh{A}_n$ built by plugging $\whn$ into $A_n$, ordered as indicated in the hypotheses. Whenever $\wh{\alpha}_n(\alpha_0)$ is not defined for the reason mentioned above, we set $Z_n^{\, 2}(\alpha_0) = +\infty$ by convention (so that $\cH_0$ below is rejected). The following result is devoted to the asymptotic behavior of $Z_n^{\, 2}(\alpha_0)$ under the null $\cH_0 : ``\alpha = \alpha_0"$ as well as the alternative $\cH_1 : ``\alpha > \alpha_0"$.
\begin{thm}
\label{ThmTest}
Assume that \ref{HypCA}, \ref{HypUR} and \ref{HypSR} hold, and that $\dE[\vert \veps_1 \vert^{\, 2+\nu}] = \eta_{\nu} < +\infty$ for some $\nu > 0$. Then, under $\cH_0 : ``\alpha = \alpha_0"$, the test statistic \eqref{DefStatTest} satisfies 
\begin{equation}
\label{CvgZnH0}
Z_n^{\, 2}(\alpha_0) \cvgd \chi_1^2
\end{equation}
where $\chi_1^2$ has a chi-square distribution with one degree of freedom. If we additionally assume that $\alpha < 2 \alpha_0$, then under $\cH_1 : ``\alpha > \alpha_0"$,
\begin{equation}
\label{CvgZnH1}
\inf_{\alpha/2 \, <\, \alpha_0\, <\, \alpha}\, Z_n^{\, 2}(\alpha_0) \cvgp + \infty.
\end{equation}
\end{thm}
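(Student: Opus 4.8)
Here the plan is short: under $\cH_0$ the true rate exponent \emph{is} $\alpha_0$, so I would invoke Theorem \ref{ThmNormAlpha} with $\alpha$ and $\alpha_0$ identified, getting $(\ln n)\sqrt{n^{1-\alpha_0}}\,(\wh{\alpha}_n(\alpha_0)-\alpha_0)\cvgd\cN(0,2c^{-1}\pi_{11}^{-2})$. Two remarks then finish it. First, the event on which $\wh{\alpha}_n(\alpha_0)$ is undefined (hence $Z_n^2(\alpha_0)=+\infty$) has vanishing probability: there $\wh{v}_n(\alpha_0)$ is the least-squares estimator of $\lambda_{n,1}$ in a correctly specified model, with $1-\lambda_1\lambda_{n,1}=cn^{-\alpha_0}$ dominating the estimation error $O_P(n^{-(1+\alpha_0)/2})$ (since $\alpha_0<1$), so $\lambda_1\wh{v}_n(\alpha_0)<1$ with probability tending to one and the $+\infty$ convention is asymptotically irrelevant. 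Second, by consistency of $\whn$ — hence entrywise convergence $\wh{A}_n\to A$, hence $\wh{\lambda}_{n,i}\cvgp\lambda_i$ with the ordering eventually fixed since the $\lambda_i$ are distinct — and continuity, $\wh{\pi}_n\cvgp\pi_{11}$, so $\wh{\pi}_n^2\cvgp\pi_{11}^2\in(0,+\infty)$, finiteness and positivity being exactly (H$_2$). Writing $Z_n^2(\alpha_0)=\tfrac{c}{2}\wh{\pi}_n^2\big[(\ln n)\sqrt{n^{1-\alpha_0}}(\wh{\alpha}_n(\alpha_0)-\alpha_0)\big]^2$, the continuous mapping theorem and Slutsky's lemma give $Z_n^2(\alpha_0)\cvgd\tfrac{c}{2}\pi_{11}^2\cdot 2c^{-1}\pi_{11}^{-2}\,\chi_1^2=\chi_1^2$.

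\textbf{Part 2 (behaviour under $\cH_1$), set-up.} Assume $\alpha_0<\alpha<2\alpha_0$; since also $\alpha<1$ this forces $\alpha_0<\alpha<(1+\alpha)/2$, hence $n^{-\alpha_0}\gg n^{-\alpha}\gg n^{-(1+\alpha)/2}$. The starting point is that $\Psi_{n,k}(\alpha_0)$ is an explicit \emph{invertible} linear image $M_n\Phi_{n,k}$ of $\Phi_{n,k}$, with $M_n$ the bidiagonal matrix encoding $V_{n,j}(\alpha_0)=X_{n,j}-\rho_n(\alpha_0)X_{n,j-1}$; comparing \eqref{OLS} and \eqref{OLS2} yields $\whvn(\alpha_0)=(M_n\T)^{-1}\whn$, so (taking $\lambda_1=1$ as in Remark \ref{RemParam})
\[
\wh{v}_n(\alpha_0)=\big\langle L_n(\alpha_0),\,\whn\big\rangle,\qquad L_n(\alpha_0)\T=\big(\rho_n(\alpha_0)^{1-i}\big)_{1\leq i\leq p},
\]
i.e. the combination of \eqref{NormCombLin} evaluated with the \emph{wrong} spectral radius $\rho_n(\alpha_0)=1-cn^{-\alpha_0}$ in place of the true $\rho(A_n)=\rho_n(\alpha)$. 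Setting $\delta_n:=\rho_n(\alpha)-\rho_n(\alpha_0)=c(n^{-\alpha_0}-n^{-\alpha})=cn^{-\alpha_0}(1+o(1))$ and using $\theta_n=J_n\beta_n+\lambda_{n,1}e_p$ from \eqref{DefJn} with $\lambda_{n,1}=\rho_n(\alpha)$, a short computation gives the deterministic identity $\langle L_n(\alpha_0),\theta_n\rangle-\lambda_{n,1}=-\delta_n\gamma_n$ with $\gamma_n:=\sum_{j=1}^{p-1}\rho_n(\alpha_0)^{-j}\beta_{n,j}\to\gamma:=\sum_{j=1}^{p-1}\beta_j$; evaluating the limiting autoregressive polynomial $1-\sum_j\beta_j z^j=\prod_{i=2}^p(1-\lambda_i z)$ at $z=1$ identifies $\gamma=1-\prod_{i=2}^p(1-\lambda_i)=1-\pi_{11}^{-1}$ (and $\gamma=0$ when $p=1$). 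Collecting terms,
\[
1-\lambda_1\wh{v}_n(\alpha_0)=cn^{-\alpha}+\gamma_n\delta_n-\big\langle L_n(\alpha_0),\whn-\theta_n\big\rangle,
\]
the first term being $1-\lambda_1\lambda_{n,1}$; the stochastic remainder is $o_P(n^{-\alpha})$, because its $L_n(\alpha)$-part is $O_P(n^{-(1+\alpha)/2})$ by \eqref{NormCombLin} and the extra $O(\delta_n)=O(n^{-\alpha_0})$ perturbation of $L_n(\alpha)$ contributes $O_P(n^{-\alpha_0-1/2})$ via $\|\whn-\theta_n\|=O_P(n^{-1/2})$, both $o(n^{-\alpha})$ since $\alpha<1$ and $\alpha_0>\alpha/2$.

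\textbf{Part 2, conclusion.} I would then split on the sign of $\gamma$, the crucial structural fact being that $\gamma\neq1$ always — this is precisely (H$_2$), which guarantees $\prod_{i=2}^p(1-\lambda_i)\neq0$. If $\gamma\neq0$, then $\gamma_n\delta_n=c\gamma n^{-\alpha_0}(1+o_P(1))$ dominates and $n^{\alpha_0}\big(1-\lambda_1\wh{v}_n(\alpha_0)\big)\cvgp c\gamma$; when $\gamma>0$ this makes $\wh{\alpha}_n(\alpha_0)$ defined with probability tending to one and $(\ln n)\big(\wh{\alpha}_n(\alpha_0)-\alpha_0\big)=\ln c-\ln\big(n^{\alpha_0}(1-\lambda_1\wh{v}_n(\alpha_0))\big)\cvgp-\ln\gamma\neq0$, whence $Z_n^2(\alpha_0)=\tfrac{c}{2}\wh{\pi}_n^2\,n^{1-\alpha_0}\big[(\ln n)(\wh{\alpha}_n(\alpha_0)-\alpha_0)\big]^2\cvgp+\infty$ (the bracket tends to $(\ln\gamma)^2\in(0,\infty)$, $\wh{\pi}_n^2\cvgp\pi_{11}^2>0$, $n^{1-\alpha_0}\to+\infty$); when $\gamma<0$, $1-\lambda_1\wh{v}_n(\alpha_0)<0$ with probability tending to one, so $\wh{\alpha}_n(\alpha_0)$ is undefined and $Z_n^2(\alpha_0)=+\infty$ by convention. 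If $\gamma=0$ (in particular whenever $p=1$), I would refine $\gamma_n=O(n^{-\alpha_0})$, using $\rho_n(\alpha_0)^{-j}-1=O(n^{-\alpha_0})$ and $\beta_{n,j}-\beta_j=O(n^{-\alpha})$, so that $\gamma_n\delta_n=O(n^{-2\alpha_0})=o(n^{-\alpha})$ — exactly where $\alpha<2\alpha_0$ is genuinely used — leaving $1-\lambda_1\wh{v}_n(\alpha_0)=cn^{-\alpha}(1+o_P(1))$, hence $(\ln n)(\wh{\alpha}_n(\alpha_0)-\alpha_0)=(\alpha-\alpha_0)\ln n+o_P(1)\to+\infty$ and again $Z_n^2(\alpha_0)\cvgp+\infty$. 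The case $\lambda_1=-1$ follows from the symmetric adjustments of Section \ref{SecNegRoot}.

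\textbf{Main obstacle.} Part 1 is a corollary of Theorem \ref{ThmNormAlpha} plus Slutsky. The work is in the misspecification analysis of Part 2 for $p\geq2$: establishing that feeding the wrong exponent into the auxiliary filtering perturbs $\wh{v}_n(\alpha_0)$ by a deterministic amount of exact order $\delta_n\asymp n^{-\alpha_0}$ with limiting coefficient $\gamma=1-\pi_{11}^{-1}$, proving $\gamma\neq1$ (this is (H$_2$) resurfacing), and doing the order bookkeeping carefully enough that all residual terms — and the \emph{whole} perturbation in the degenerate case $\gamma=0$ — are $o_P(n^{-\alpha})$, which is exactly the role played by the hypothesis $\alpha<2\alpha_0$. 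The ever-present factor $n^{1-\alpha_0}\to+\infty$ then forces the divergence.
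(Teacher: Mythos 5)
\textbf{Part 1} is exactly the paper's argument (consistency of $\whn$, continuity of the spectrum giving $\wh{\pi}_n \cvgp \pi_{11}$, then Theorem \ref{ThmNormAlpha} with $\alpha_0=\alpha$ and Slutsky), with the welcome extra remark that the undefinedness event is asymptotically negligible under $\cH_0$. \textbf{Part 2} is correct in substance but follows a genuinely different route. The paper stays inside its own machinery: it reuses the Taylor expansion \eqref{eq_DecompAlpha0} around $\rho_n(\alpha_0)$, invokes the second half of Lemma \ref{LemCvgVarTheta} to get $\sqrt{n^{1+\alpha_0}}\,\vert \wh{v}_n(\alpha_0)-\rho_n(\alpha_0)\vert \cvgp +\infty$ (driven by the deterministic bias $n^{\alpha_0}e_p\T(\tilde{R}_{n,1}+\tilde{R}_{n,3}) \to c(1-s_\beta)$ with $1-s_\beta=\Theta_\beta(1)=\pi_{11}^{-1}\neq 0$), and bounds $n^{-\alpha_0}/\vert 1-\vartheta^{**}_n\vert$ from below — no case split on the sign of $s_\beta$ is needed. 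You instead observe that $\whvn(\alpha_0)=(M_n\T)^{-1}\whn$, so that $\wh{v}_n(\alpha_0)=\langle L_n(\alpha_0),\whn\rangle$, and you track $1-\lambda_1\wh{v}_n(\alpha_0)$ directly by importing \eqref{NormCombLin} and the $\sqrt{n}$-rate of $\whn$ from \citet{BadreauProia2023}. Your $\gamma$ is the paper's $s_\beta$, your identity $\gamma=1-\pi_{11}^{-1}$ (hence $\gamma\neq 1$) is the same structural fact the paper uses as $1-s_\beta\neq 0$, and both methods isolate the same $c(1-\gamma)\,n^{-\alpha_0}$ bias. Your route bypasses the misspecified information matrix $T_{n,\,n}(\alpha_0)$ entirely in this part and makes the origin of the bias more transparent; the paper's route is self-contained and avoids any case discussion.

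There is one step that is not justified by the hypotheses: in the sub-case $\gamma=0$ you assert $\beta_{n,\,j}-\beta_j=O(n^{-\alpha})$. Assumptions \ref{HypCA}--\ref{HypSR} prescribe a rate only for the spectral radius; the non-dominant eigenvalues (hence the $\beta_{n,\,j}$) converge with no controlled rate, so $\gamma_n\,\delta_n$ can only be claimed to be $o(n^{-\alpha_0})$, not $o(n^{-\alpha})$, and the expansion $1-\lambda_1\wh{v}_n(\alpha_0)=c\,n^{-\alpha}(1+o_P(1))$ does not follow. Fortunately the refinement is dispensable: $\gamma_n\to 0$ alone gives $n^{\alpha_0}\big(1-\lambda_1\wh{v}_n(\alpha_0)\big)\cvgp 0$, so for every $M>0$ the probability that either $\lambda_1\wh{v}_n(\alpha_0)\geq 1$ (convention $Z_n^{\,2}=+\infty$) or $\ln c-\ln\big(n^{\alpha_0}(1-\lambda_1\wh{v}_n(\alpha_0))\big)>M$ tends to one, and the bracket $(\ln n)\big(\wh{\alpha}_n(\alpha_0)-\alpha_0\big)$ already diverges without appealing to the factor $n^{1-\alpha_0}$. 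With that repair — which only uses $\gamma_n\to\gamma$, as in the paper's treatment of $u_n\to s_\beta$ — your argument is complete.
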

\begin{proof}
See Section \ref{SecProofThmTest}.
\end{proof}
From now on, we replace the assumption $\alpha < 2 \alpha_0$ by the stronger one $\alpha_0 \geq 1/2$ (this will be justified in the empirical section). As a corollary, for some $\alpha_0 \geq 1/2$, the rejection of $\cH_0 : ``\alpha = \alpha_0"$ in favor of $\cH_1 : ``\alpha > \alpha_0"$ will happen as soon as $Z_n^{\, 2}(\alpha_0) > z_{1-\epsilon}$ where $z_{1-\epsilon}$ is the quantile of order $1-\epsilon$ of the $\chi_1^2$ distribution. All the empirical study of Section \ref{SecEmp} is based on this rule \correc{(but note that instead of \eqref{DefStatTest}, we could as well use the first component of $\whvn(\alpha_0)$ in \eqref{OLS2} and reach the same conclusions, through the limiting distribution of Lemma \ref{LemCvgVarTheta}).} To evaluate the extent of instability, a natural choice may be, given $c$,
\begin{equation}
\label{TestAlphaMax}
\wh{\alpha}_c = \inf\, \{ \alpha_0 \in \cA\,:\, \text{$\cH_0$ is not rejected} \}
\end{equation}
where $\cA \subset\, [1/2, 1[$ is a grid of test values (by convention $\inf \emptyset = +\infty$ so that our conclusion is that the series is integrated if  $\cH_0$ is always rejected and $\alpha \geq 1/2$). It remains to generalize the procedure to the less common situation where the limit process has a negative unit root, that is $\lambda_1=-1$. Actually, Theorems \ref{ThmNormAlpha} and \ref{ThmTest} are still valid: $\lambda_1$ already appears in the first component of $\vartheta_n(\alpha)$ in \eqref{ModVarTheta}, \textit{i.e.} $\lambda_{n,\, 1} = \lambda_1 \rho_n$, and in $\wh{\alpha}_n(\alpha)$ given in \eqref{EstAlpha}. The reader is referred to Section \ref{SecNegRoot} for more technical details.

\subsection{\correc{The variance parameter $c$ is estimated}}
\label{SecTestUnknown}

\correc{As mentioned in the beginning of the section, the over-parametrization makes it impossible to dissociate $c$ from $\alpha$ without additional constraints. We suggest instead a cross-validation with the following protocol:
\begin{itemize}
\item For each $c_0 \in \cC$, compute $\wh{\alpha}_{c_0}$ based on the first $(n-n_{val})$ values of the series.
\item Choose $(\wh{c}_0,\, \wh{\alpha}_{\hat{c}_0})$ as the pair that optimizes some criterion stemming from the one-step predictions of the $n_{val}$ remaining values.
\end{itemize}
Clearly, the value of $\alpha$ selected by \eqref{TestAlphaMax} has to be biased downwards (due to overlapping confidence intervals for increasing values of $\alpha_0$). Thus, we must also expect $\wh{c}_0$ to be biased downwards because of the ratio $c/n^{\alpha}$. Paradoxically, despite unsatisfactory individual evaluations of $c$ and $\alpha$, the compensated result may prove interesting, as the empirical section suggests (see Figure \ref{FigSim6}). In terms of computational costs, proceeding like this is a great faster than testing all $(c, \alpha) \in \cC \times \cA$ since the cross-validation criterion is only computed once for each $c$. Overall, the procedure contains $O(1+\vert\cC\vert\vert\cA\vert + \vert\cC\vert n_{val})$ steps, where in this context a `step' requires at worst the inversion of a $p \times p$ matrix ($p \ll n$). Hence, this is not so large considering that $n_{val}$ is usually a small percentage of $n$. Especially as from a practical point of view, it does not seem necessary for $\cC$ to have a very fine mesh (unlike $\cA$).}

\section{Empirical Study}
\label{SecEmp}

This section is dedicated to the empirical performances of the test procedure described in Theorem \ref{ThmTest} together with the selection rule \eqref{TestAlphaMax}, and also to the cross-validation process presented in Section \ref{SecTestUnknown}.

\subsection{On simulated data (c is known)}
\label{SecEmpSimKnown}

Without loss of generality, let us consider $c=1$ to produce the data, and set $\rho_n = 1 -  n^{-\alpha} = \pm \lambda_{n,\, 1} $ with $1/2 \leq \alpha < 1$. For each simulation, $\lambda_{n,\, 2}, \ldots, \lambda_{n,\, p}$ are randomly chosen in the interval $[-\rho_n+0.1,\, \rho_n-0.1]$, and a process of size $n$ is generated with a standard Gaussian noise. The test of $\cH_0 : ``\alpha = \alpha_0"$ against $\cH_1 : ``\alpha > \alpha_0"$ is done \textit{via} Theorem \ref{ThmTest} for $\alpha_0 \in \cA = \{ 1/2 + k/50,\, k = 0, \ldots, 24  \}$. The rejection frequencies are evaluated on the basis of 5000 replications for each $\alpha_0$. The distribution of $Z_n^{\, 2}(\alpha_0)$ and the empirical power of the test with 5\% type I risk are represented on Figure \ref{FigSim1} ($\alpha=2/3$), Figure \ref{FigSim2} ($\alpha=3/4$) and Figure \ref{FigSim3} ($\alpha=4/5$), for $\lambda_1=1$, $n=1000$ and $p \in \{ 1, 2, 3, 4 \}$. To get an idea of the impact of smaller samples and a negative unit root, similar experiments are conducted on Figure \ref{FigSim4} (with $\lambda_1=1$, $\alpha=4/5$, $p=1$, $n=250$ on the top, and $\lambda_1=-1$, $\alpha=4/5$, $p=1$, $n=1000$ on the bottom). The light-gray items (at the right of the vertical line) correspond to $\alpha_0 > \alpha$, situations beyond the scope of our study that we decided anyway to incorporate in the simulations for informational purposes. Finally, the distribution of $\wh{\alpha}_{c\,=\,1}$ over 5000 replications is outlined for $\alpha \in \{ 2/3, 3/4, 4/5 \}$ with $n=250$ and $n=1000$, on Figure \ref{FigSim5}. For the latter simulations, we have adopted the rule \eqref{TestAlphaMax} with 5\% type I risk.

\begin{figure}[h]
\centering \includegraphics[scale=0.15]{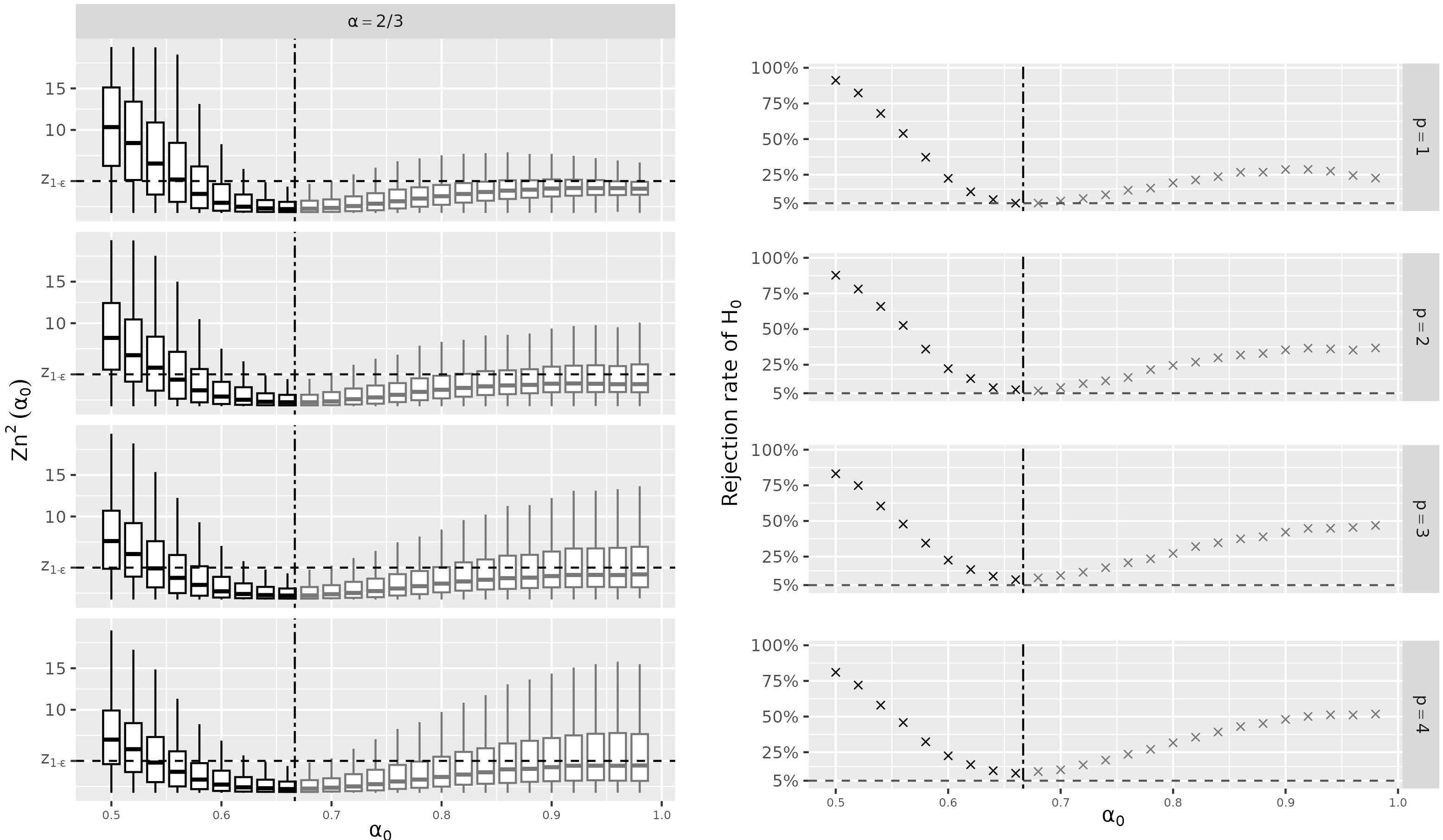}
\caption{Empirical distribution of $Z_n^{\, 2}(\alpha_0)$ for $n=1000$, $\lambda_1=1$, $\alpha=2/3$, $\alpha_0 \in \cA$ and $p \in \{ 1, 2, 3, 4 \}$ on the left. The  corresponding rejection frequencies of $\cH_0 : ``\alpha = \alpha_0"$ against $\cH_1 : ``\alpha > \alpha_0"$ (5\% level) are represented on the right.}
\label{FigSim1}
\end{figure}

\begin{figure}[h]
\centering \includegraphics[scale=0.15]{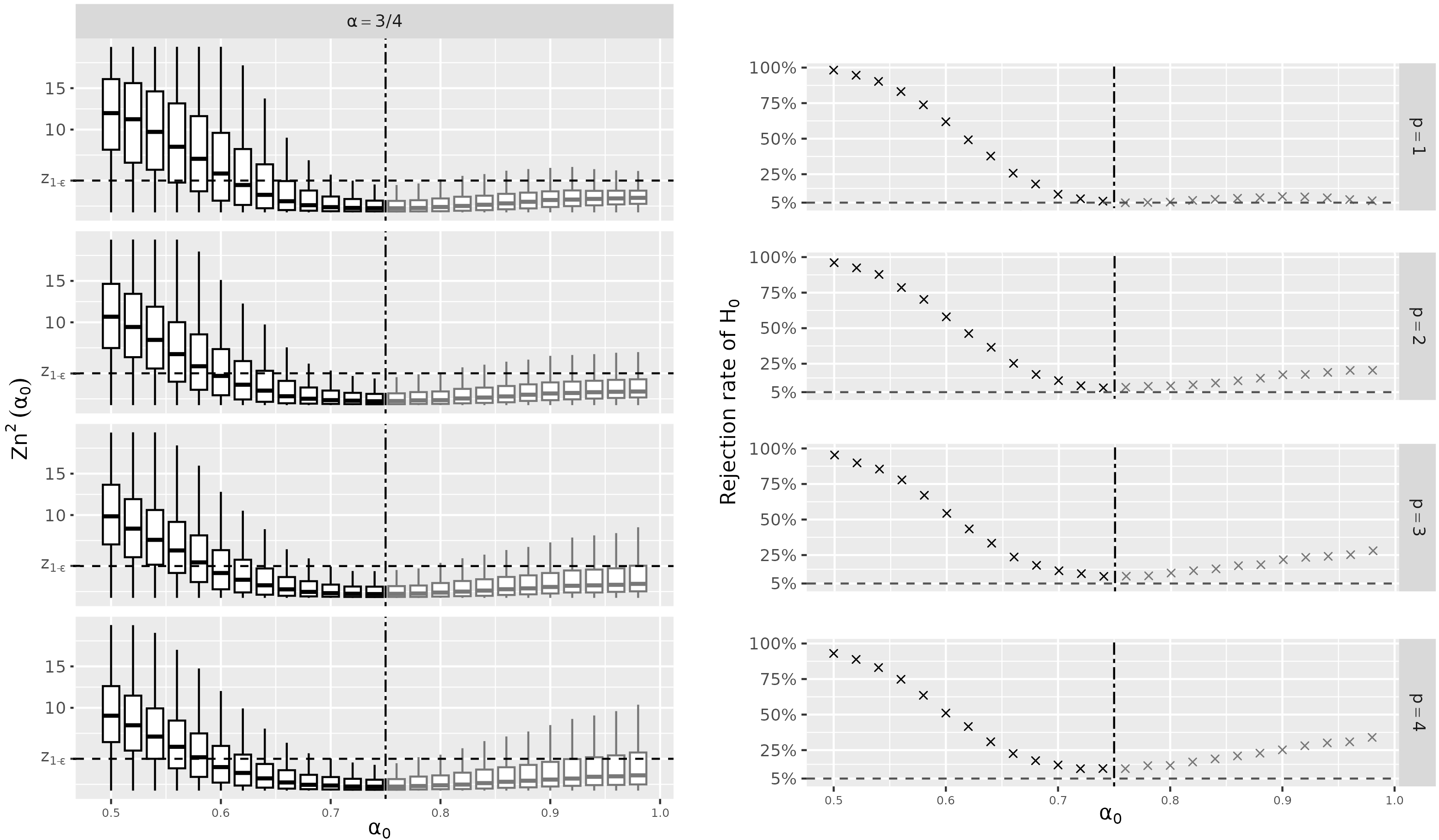}
\caption{Empirical distribution of $Z_n^{\, 2}(\alpha_0)$ for $n=1000$, $\lambda_1=1$, $\alpha=3/4$, $\alpha_0 \in \cA$ and $p \in \{ 1, 2, 3, 4 \}$ on the left. The  corresponding rejection frequencies of $\cH_0 : ``\alpha = \alpha_0"$ against $\cH_1 : ``\alpha > \alpha_0"$ (5\% level) are represented on the right.}
\label{FigSim2}
\end{figure}

\begin{figure}[h]
\centering \includegraphics[scale=0.15]{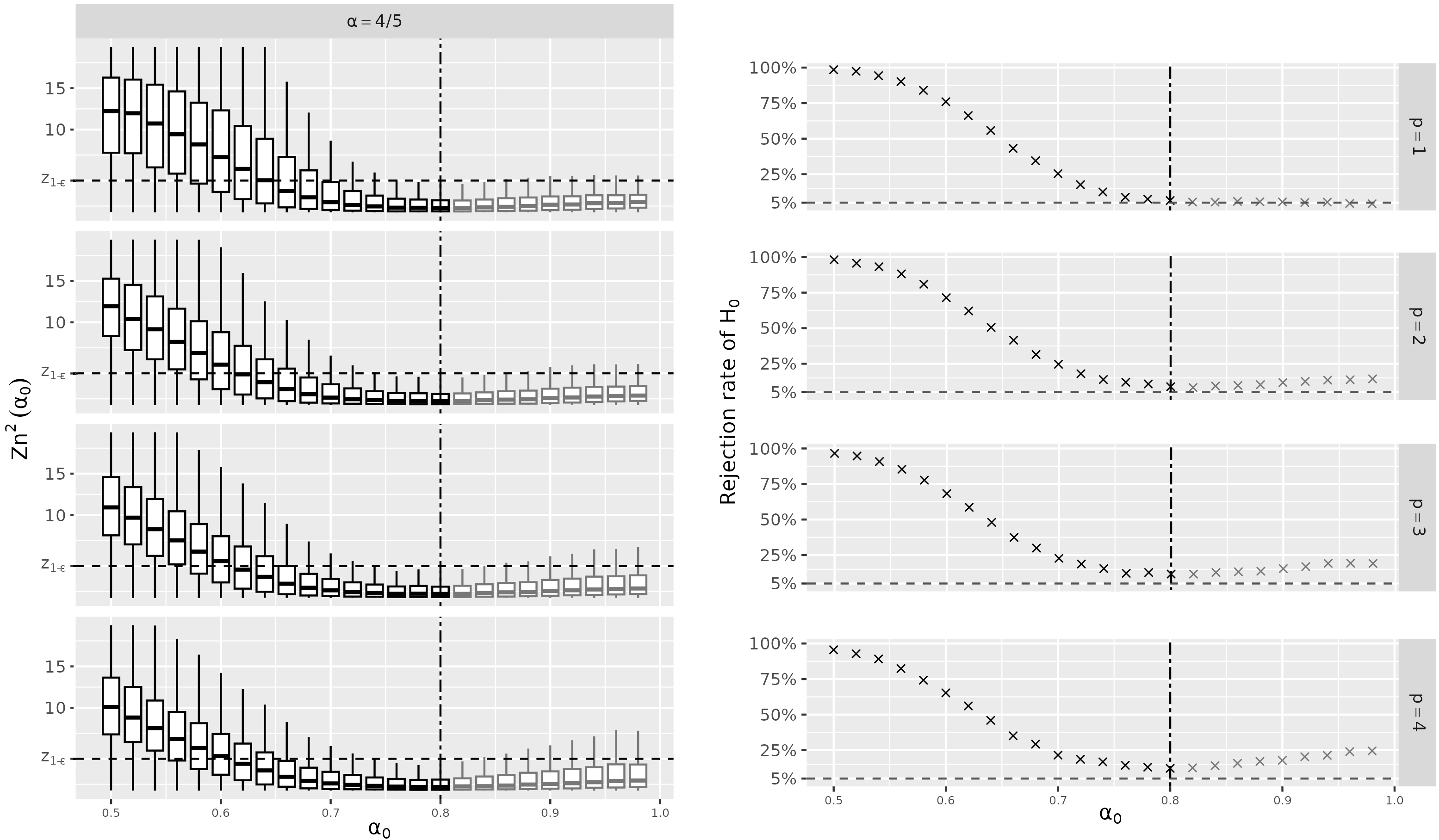}
\caption{Empirical distribution of $Z_n^{\, 2}(\alpha_0)$ for $n=1000$, $\lambda_1=1$, $\alpha=4/5$, $\alpha_0 \in \cA$ and $p \in \{ 1, 2, 3, 4 \}$ on the left. The  corresponding rejection frequencies of $\cH_0 : ``\alpha = \alpha_0"$ against $\cH_1 : ``\alpha > \alpha_0"$ (5\% level) are represented on the right.}
\label{FigSim3}
\end{figure}

\smallskip

From the simulations of Figures \ref{FigSim1}--\ref{FigSim3}, we observe that $p$ has a negligible influence on the asymptotic behaviors when $\lambda_1=1$. Under $\cH_0$, the rejection rate is $\approx 5\%$ as expected and the power of the test is quite satisfying under $\cH_1$ ($\geq 75\%$ for $\alpha_0 \leq \alpha-0.2$, $\approx 100\%$ for $\alpha_0 \leq \alpha-0.3$). The decrease in the rejection rates is rather similar for $\alpha_0$ growing from $1/2$ to $\alpha$, whether $\alpha=2/3$, $\alpha=3/4$ or $\alpha=4/5$. However, we can see that the trickier alternatives where $\alpha_0 \leq \alpha-0.1$ are underestimated ($\approx 25\%$): this is due to the fine mesh of $\cA$, it is difficult to discern substantial differences in the statistic between values of $\alpha_0$ that increases 0.02 by 0.02 and overall, the numerical results are encouraging.

\begin{figure}[h]
\centering \includegraphics[scale=0.15]{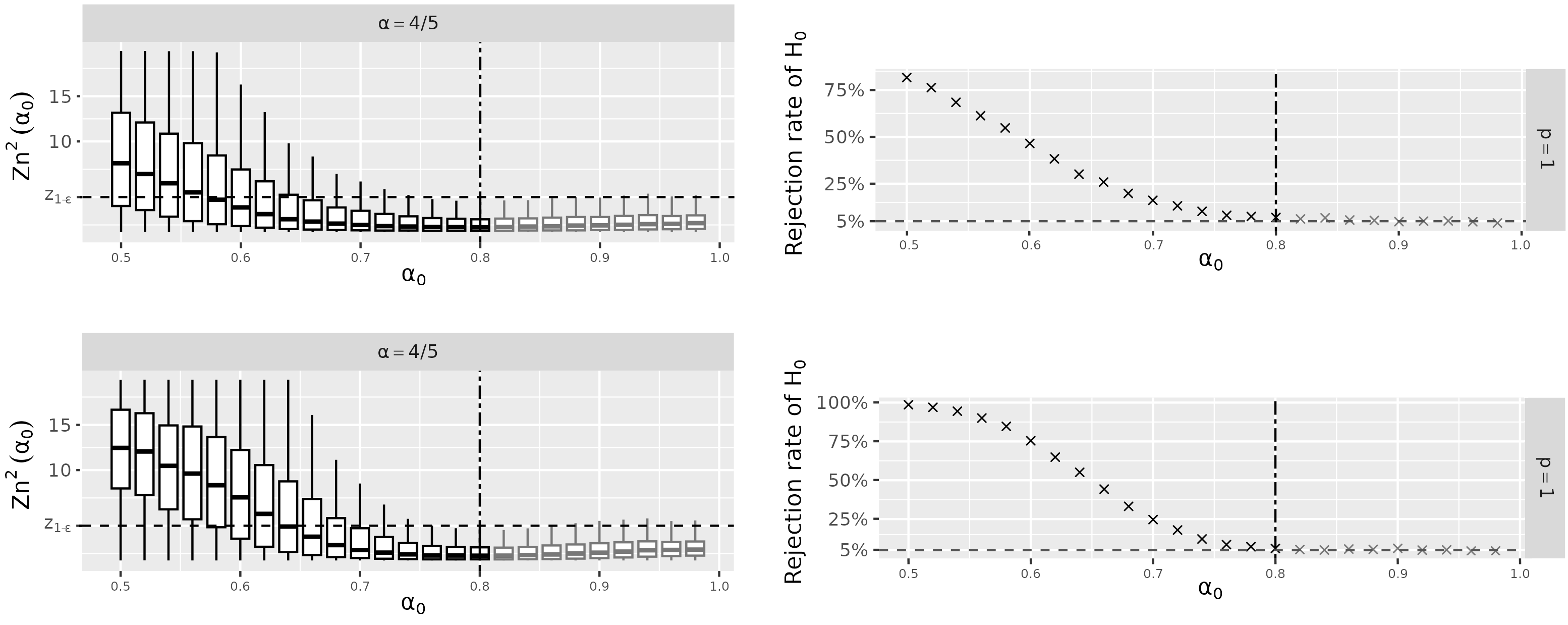}
\caption{Empirical distribution of $Z_n^{\, 2}(\alpha_0)$ for $n=250$, $\lambda_1=1$, $\alpha=4/5$, $\alpha_0 \in \cA$ and $p=1$ on the top left. The  corresponding rejection frequencies of $\cH_0 : ``\alpha = \alpha_0"$ against $\cH_1 : ``\alpha > \alpha_0"$ (5\% level) are represented on the right. Same experiments, but with $n=1000$, $\lambda_1=-1$, $\alpha=4/5$, $\alpha_0 \in \cA$ and $p=1$ on the bottom.}
\label{FigSim4}
\end{figure}

\smallskip

In terms of sample sizes, the results are obviously impacted when we set $n=250$ as can be seen on the upper part of Figure \ref{FigSim4} with $p=1$ and $\alpha=4/5$ ($\geq 50\%$ for $\alpha_0 \leq \alpha-0.2$), but the global behavior follows the same lines. The real time series of next subsection are even smaller ($n=80$ to $n=129$), that will mainly result in larger confidence intervals. Under the hypothesis that the limit process has a negative unit root, Figure \ref{FigSim4} (on the bottom) also highlights the same efficiency but we must precise here that, unlike $\lambda_1=1$, more experiments than those presented here showed a notable degradation with $p$: the decrease is faster and the statistic falls below $z_{1-\epsilon}$ too quickly. However, despite the lack of power observed for $p \geq 2$, this remains an innovative approach since we recall that the ADF, KPSS, and all usual tests for integration or trend-stationarity are unable to detect instability coming from negative unit roots.  

\begin{figure}[h]
\centering \includegraphics[scale=0.35]{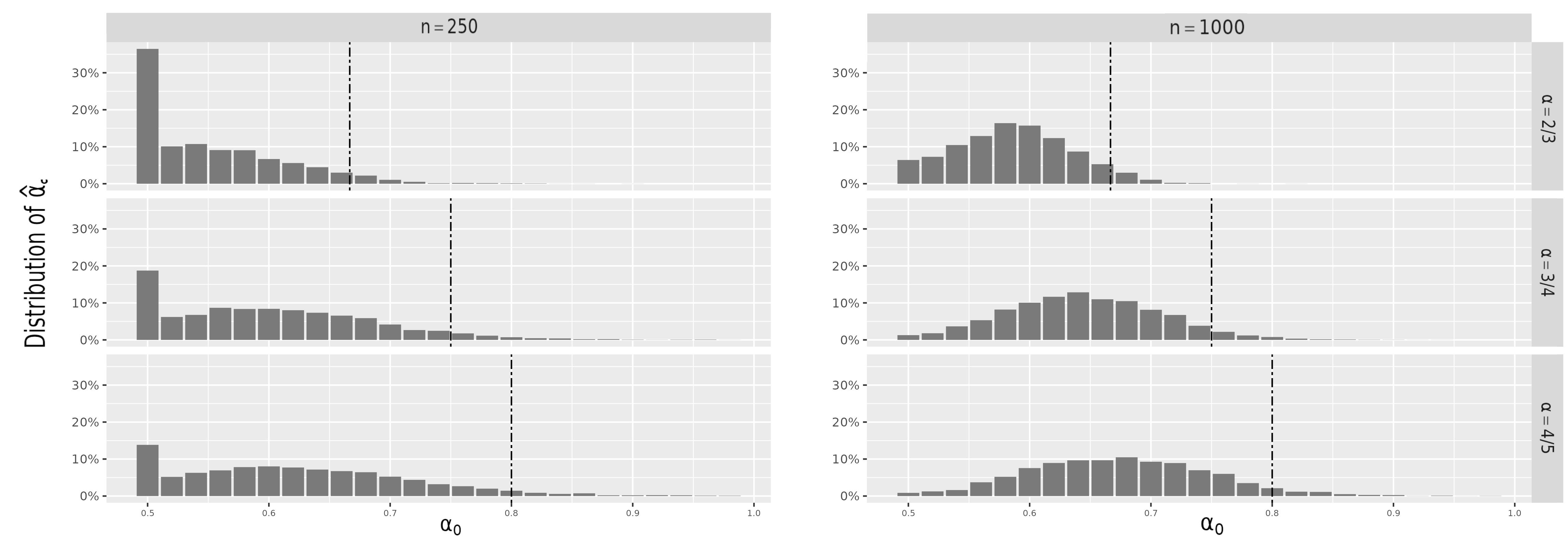}
\caption{Empirical distribution of $\wh{\alpha}_c$ (5\% level) with known value $c=1$, for $n=250$ (left) and $n=1000$ (right), $\lambda_1=1$, $\alpha \in \{ 2/3, 3/4, 4/5 \}$, $\alpha_0 \in \cA$ and $p=1$.}
\label{FigSim5}
\end{figure}

\smallskip

As expected, the selected values of $\alpha$ on Figure \ref{FigSim5} are underestimated, on average: for $n=1000$, we can roughly say that $\wh{\alpha}_{c\,=\,1}$ is close to $(\alpha-0.1)$, yet greater than $(\alpha-0.2)$ with an overwhelming probability. It is a direct consequence of the construction rule \eqref{TestAlphaMax} and overlapping confidence intervals (in particular, together with $\inf \cA = 1/2$, that explains the initial peak on smaller samples). \correc{So, the spectral radius must be underestimated as well. Fortunately the next section will show that, by evaluating $c$ together with $\alpha$ instead of arbitrarily setting $c=1$, the estimation is clearly improved.} Of course a fully unbiased strategy may be an interesting trail for future studies (even if the procedure is asymptotically unbiased), see the conclusion for some comments on the topic.

\subsection{\correc{On simulated data (c is estimated)}}
\label{SecEmpSimUnknown}

\correc{The estimation of $c$ according to the cross-validation described in Section \ref{SecTestUnknown} actually proves beneficial. Indeed, let us consider three groups of experiments generated as in the previous section, with $n=1000$, $\lambda_1=1$, $p=2$ and $(\alpha,\, c)$ uniformly chosen out of $\cA \times \cC = \{ 1/2 + k/50,\, k = 0, \ldots, 24 \} \times \{ 1/2 + k/5,\, k = 0, \ldots, 22  \}$. Figure \ref{FigSim6} shows the scatterplot of $\wh{c}/n^{\wh{\alpha}}$ as a function of $c/n^{\alpha}$ together with a linear fit (with 95\% confidence interval) for each of them. In the first one, we basically set $c=1$ and estimate $\alpha$ using \eqref{TestAlphaMax}. This corresponds to the decreasing linear fit on the plot and reveals a significant deterioration, hence the inadequacy of making $\wh{c}=1$ independent of the data: the bias of $\wh{\alpha}_{\hat{c}\,=\,1}$ may strongly affect the results. The second group of experiments consists of giving $\wh{c}=c$ the true value before computing $\wh{\alpha}_{\hat{c}\,=\,c}$ by the same strategy. The linear fit is clearly better, but the parallelism with the bisector of the coordinate axes unveils a systematic bias. This is a predictable consequence of overlapping confidence intervals of \eqref{TestAlphaMax} already mentioned in the previous section: $\wh{\alpha}_{\hat{c}}$ is underestimated so that $c/n^{\wh{\alpha}_{\hat{c}}}$ overestimates the true value and the points are generally above the identity function. Finally, in the third group of experiments we estimate the pair $(c, \alpha)$ by the cross-validation of Section \ref{SecTestUnknown} with $c_0 \in \cC$ defined above, $n_{val} = n/10$ and the MSE criterion. One can see that except for the small values of $c/n^{\alpha}$ (either $c$ is small or $\alpha$ is large) corresponding to a spectral radius close to one and where all methods perform quite similarly, only the last one is able to counterbalance the bias of $\wh{\alpha}_{\hat{c}}$ and results in a more reliable estimation. Incidentally, the t-tests for equality of slopes in the groups confirm the visual intuition through a clear rejection. The latter approach is now going to be used on a real dataset.}

\begin{figure}[h]
\centering \includegraphics[scale=0.33]{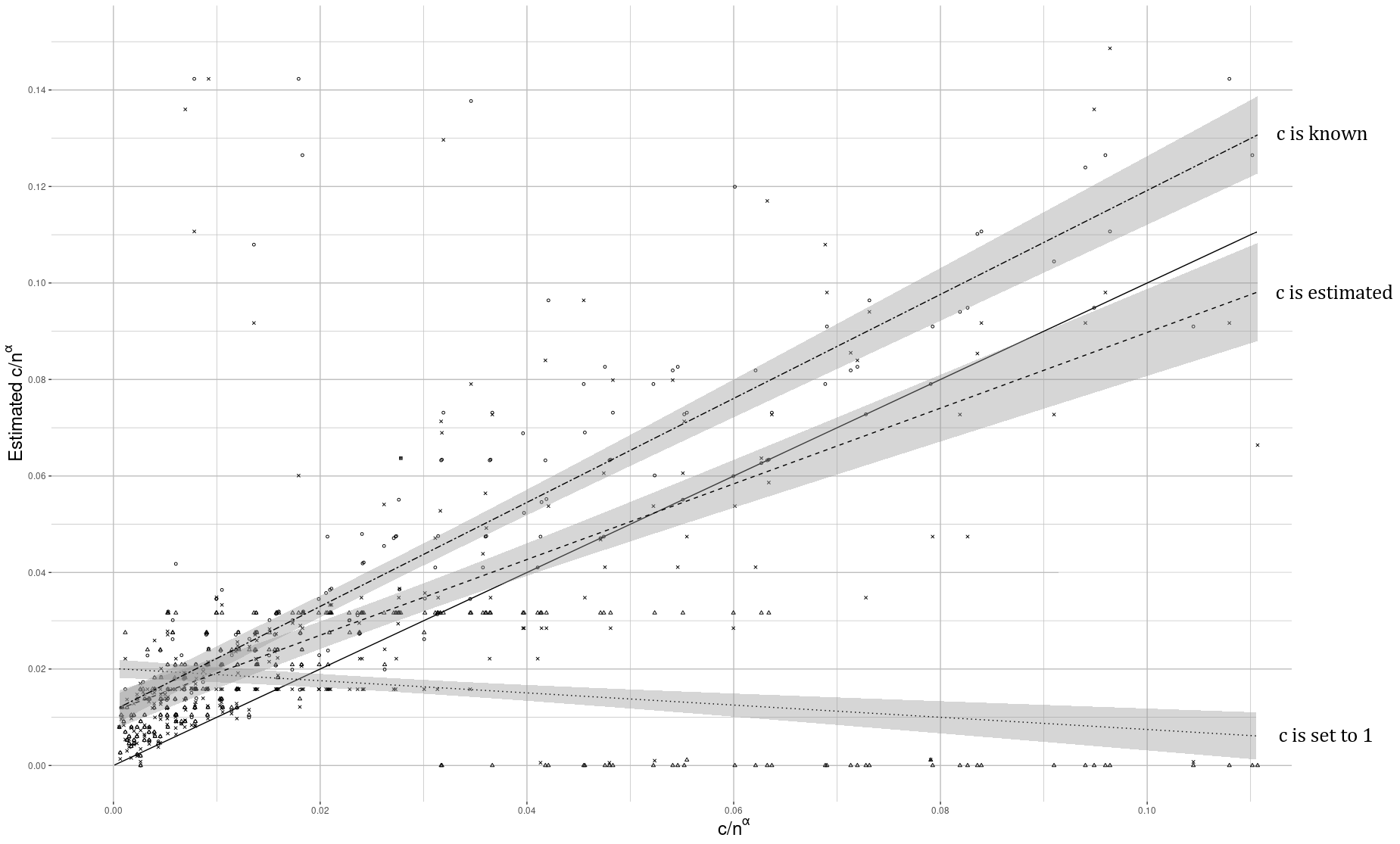}
\caption{\correc{Scatterplot of $\wh{c}/n^{\wh{\alpha}}$ as a function of $c/n^{\alpha}$ in three different cases: $c$ is set to 1 (triangles), $c$ is known (circles) and $c$ is estimated by cross-validation (crosses). Experiments are conducted according to the given protocol to simulate the data and estimate $\alpha$. A linear fit (with 95\% confidence interval) shows the mean evolution in each group and the solid line is the identity function}.}
\label{FigSim6}
\end{figure}

\subsection{On a real dataset}
\label{SecEmpReal}

The extended dataset \texttt{NelPlo} is available as open-source \url{https://www.rdocumentation.org/packages/tseries/versions/0.9-9/topics/NelPlo} and contains 14 macroeconomic series ending in 1988. According to \citet{NelsonPlosser82}, those data are known to be either integrated (in the sense of the ADF test) or trend-stationary with spectral radius close to 1, so let us approach them from the nearly-unstable point of view, with positive unit root ($\lambda_1 = 1$). It is important to note that the data of \texttt{NelPlo} are longer than the original ones so that our conclusions may be slightly different. The statistical protocol itself is different: $p$ is chosen by looking at the partial autocorrelations of the differentiated series and no trend is estimated. \correc{Then, the pair $(\wh{c},\,\wh{\alpha}_{\hat{c}})$ is selected through the cross-validation described in Section \ref{SecTestUnknown}, with $n_{val} = n/10$ and the MSE criterion. An (approximate) 95\%-confidence interval of $\wh{\alpha}_{\hat{c}}$ is also derived from Theorem \ref{ThmNormAlpha} with $c=\wh{c}$. Because $n$ is usually small, in any case much smaller than in the simulation study, we have restrained the domain of $c$ and only considered a search grid $\cC \subset [0.8, 1.2]$, while as before $\cA \subset [0.5, 1[$.} See Table \ref{TabNelPloUnknownC} below for the numerical outcomes.

\begin{table}[h]
\begin{center}
\begin{tabular}{|c|c|c|c|c|c|}
\hline
Series & $n$ & $p$ & $\wh{c}$ & $\wh{\alpha}_{\hat{c}}$ & $\wh{\rho}_n = 1 - \wh{c}/n^{\wh{\alpha}_{\hat{c}}}$ \\
\hline
Velocity & 120 & 1 & 0.80 & $\in [0.50,\, 0.66]$ & $\in [0.93,\, 0.97]$ \\
\hline
Industrial production & 129 & 6 & 0.80 & $\in [0.50,\, 0.72]$ & $\in [0.93,\, 0.98]$ \\
\hline
Nominal GNP & 80 & 2 & 0.80 & $\in [0.50,\, 0.57]$ & $\in [0.91,\, 0.93]$ \\
\hline
Consumer prices & 129 & 4 & 1.08 & $\in [0.73,\, 0.98]$ & $\in [0.97,\, 0.99]$ \\
\hline
Employment & 99 & 5 & 0.80 & $\in [0.50,\, 0.59]$ & $\in [0.92,\, 0.95]$ \\
\hline
Interest rate & 89 & 4 & 0 & $+\infty$ & 1 \\
\hline
Wages & 89 & 2 & 0.80 & $\in [0.50,\, 0.56]$ & $\in [0.92,\, 0.94]$ \\
\hline
GNP deflator & 100 & 6 & 1.20 & $\in [0.50,\, 0.53]$ & $\in [0.88,\, 0.90]$ \\
\hline
Money stock & 100 & 3 & 0.80 & $\in [0.50,\, 0.53]$ & $\in [0.92,\, 0.93]$ \\
\hline
Real GNP & 80 & 4 & 1.10 & $\in [0.50,\, 0.74]$ & $\in [0.88,\, 0.96]$ \\
\hline
Common stock prices & 118 & 6 & 0.82 & $\in [0.50,\, 0.98]$ & $\in [0.92,\, 0.99]$ \\
\hline
Real per capita GNP & 80 & 2 & 0.80 & $\in [0.50,\, 0.62]$ & $\in [0.91,\, 0.95]$ \\
\hline
Real wages & 89 & 2 & 1.04 & $\in [0.50,\, 0.79]$ & $\in [0.89,\, 0.97]$ \\
\hline
Unemployment rate & 99 & 3 & 1.08 & $\in [0.50,\, 0.65]$ & $\in [0.89,\, 0.95]$ \\
\hline
\end{tabular}
\end{center}
\caption{\texttt{NelPlo} dataset from the nearly-unstable point of view. \correc{For each series, $(\wh{c},\,\wh{\alpha}_{\hat{c}})$ coming from the cross-validation of Section \ref{SecTestUnknown} is given together with an (approximate) 95\% confidence interval for $\wh{\alpha}_{\hat{c}}$.} The estimated spectral radius is also precised.}
\label{TabNelPloUnknownC}
\end{table}

\smallskip

Although the statistical context is different, our results are rather consistent with Table 5 of \citet{NelsonPlosser82}. According to these authors, all series except `Unemployment rate' are integrated. \correc{In fact it seems that among them, only `Interest rate' has a unit root whereas the nearly-unstable approach may be suitable for the others. It should be noted that the only stationary series of the authors (radius $\approx 0.71$) corresponds to one of the lowest spectral radius given by our procedure. Similarly, the most integrated series of the authors is `Interest rate' (radius $\approx 1.03$), that is the only one being considered as integrated in our study. There are obviously points of divergence due to different protocols, to the fact that we did not try to detrend the series beforehand, and so on, but it is interesting to note that we share the same general ideas on these data. For illustrative purposes, Figures \ref{FigNelPlo1}-\ref{FigNelPlo4} contain some curves from the dataset and associated forecasts.}

\begin{figure}[h]
\centering \includegraphics[scale=0.45]{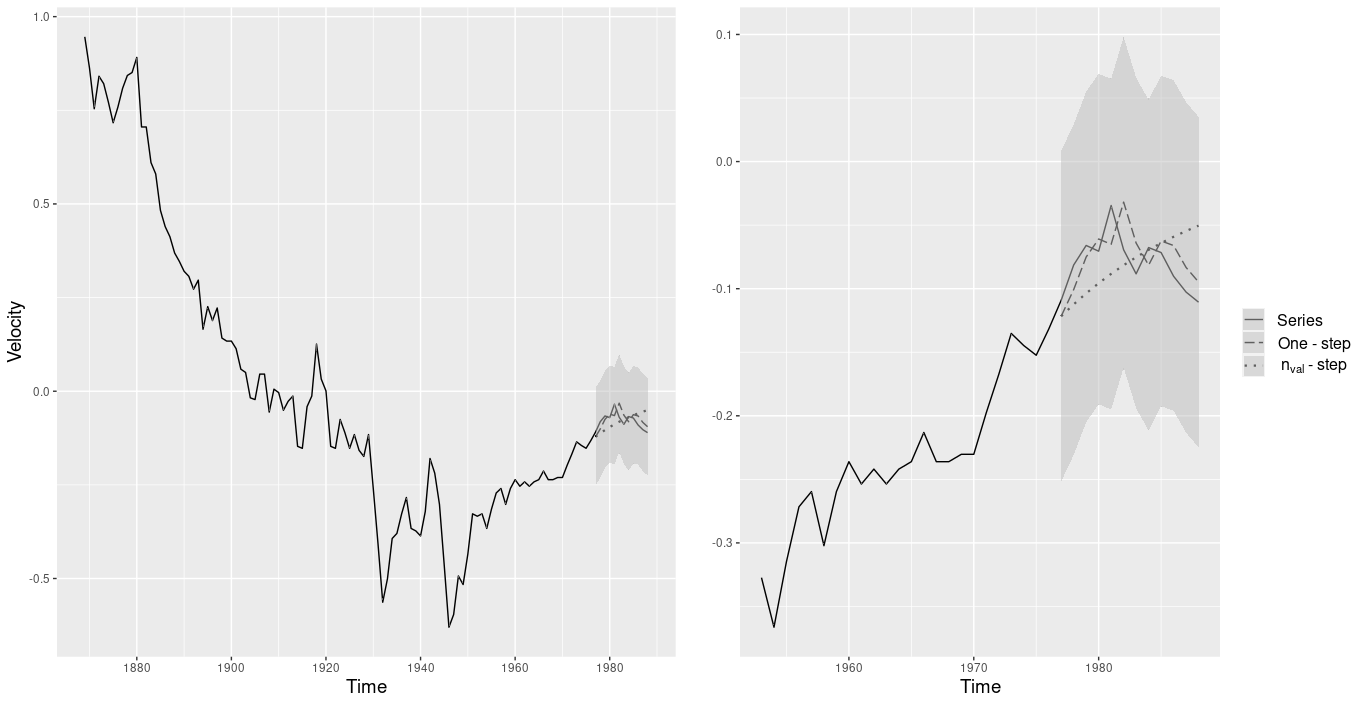}
\caption{\correc{Series `Velocity' and associated predictions for the last $n_{val} = 12$ values (one-step and $n_{val}$-step forecasts). The Gaussian 95\% confidence interval for one-step predictions is added. The whole series is represented on the left whereas on the right a zoom focuses on the last decades.}}
\label{FigNelPlo1}
\end{figure}

\begin{figure}[h]
\centering \includegraphics[scale=0.45]{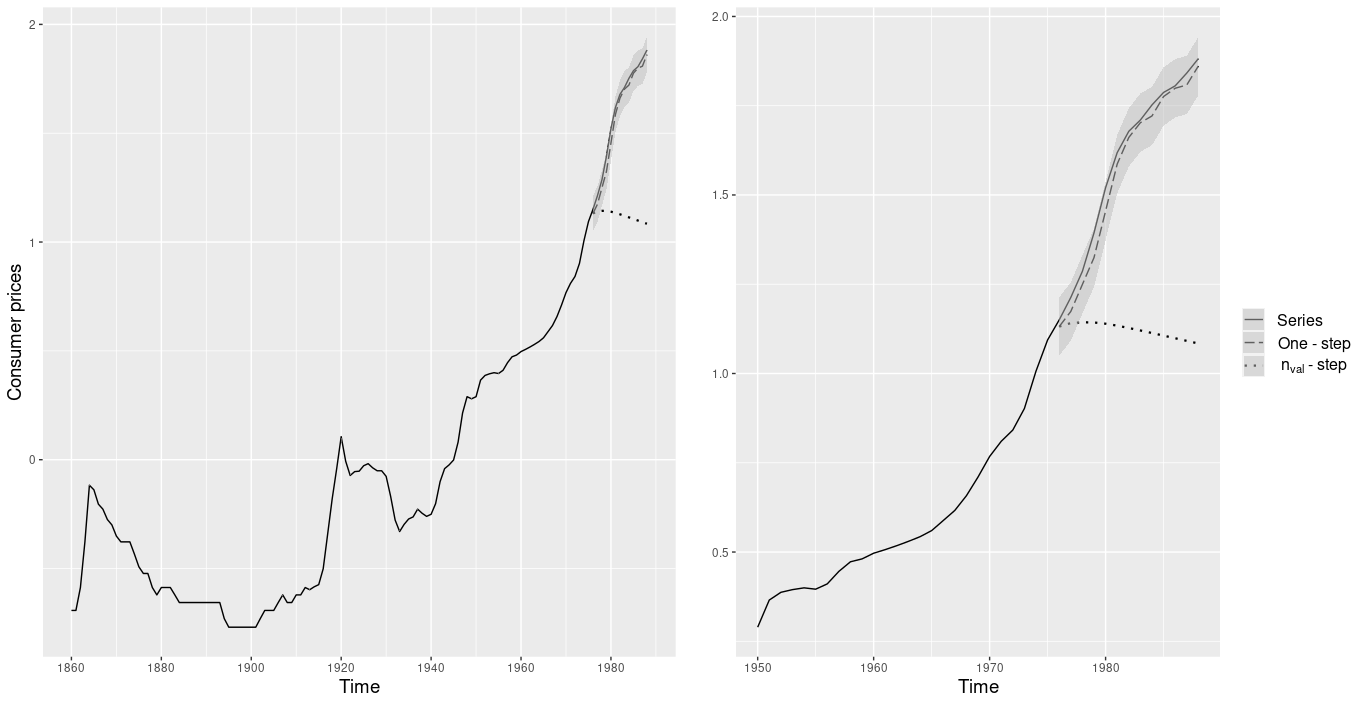}
\caption{\correc{Same graphs as in Figure \ref{FigNelPlo1} with the series `Consumer prices' and $n_{val} = 13$ values.}}
\label{FigNelPlo2}
\end{figure}

\begin{figure}[h]
\centering \includegraphics[scale=0.45]{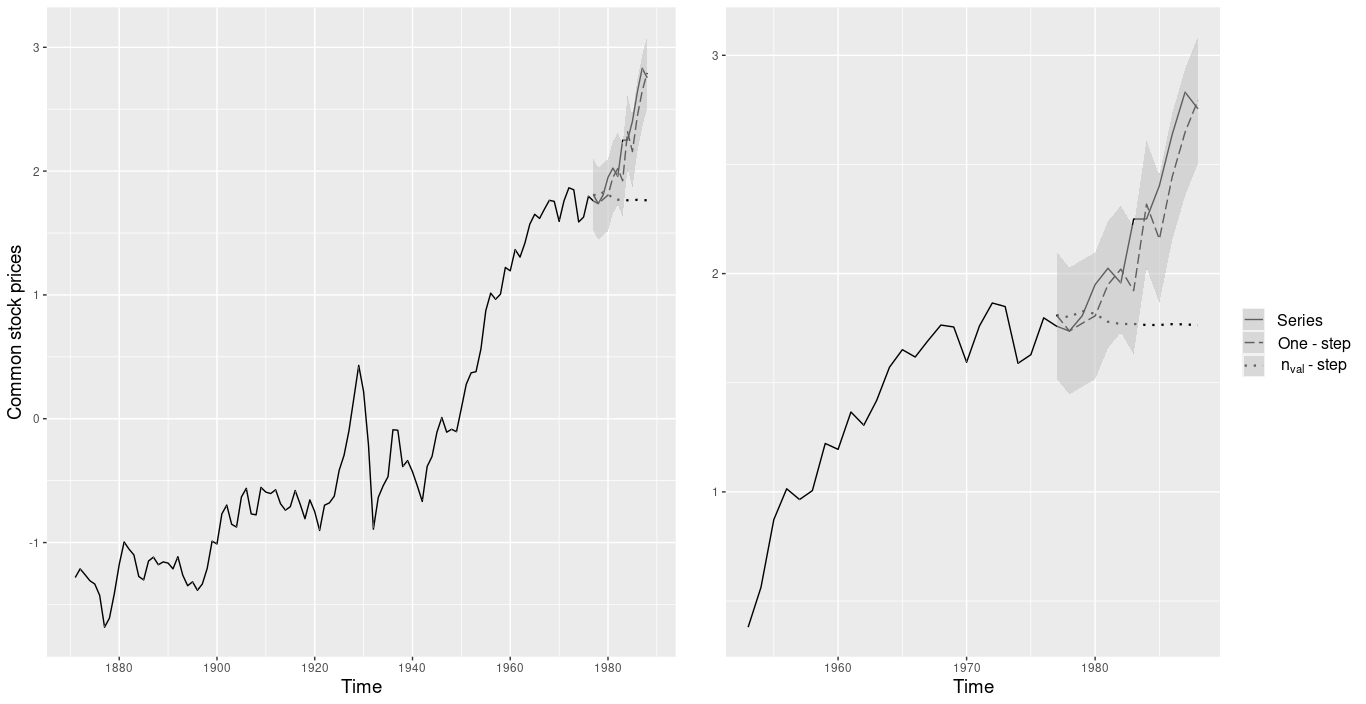}
\caption{\correc{Same graphs as in Figure \ref{FigNelPlo1} with the series `Common stock prices' and $n_{val} = 12$ values.}}
\label{FigNelPlo3}
\end{figure}

\begin{figure}[h]
\centering \includegraphics[scale=0.45]{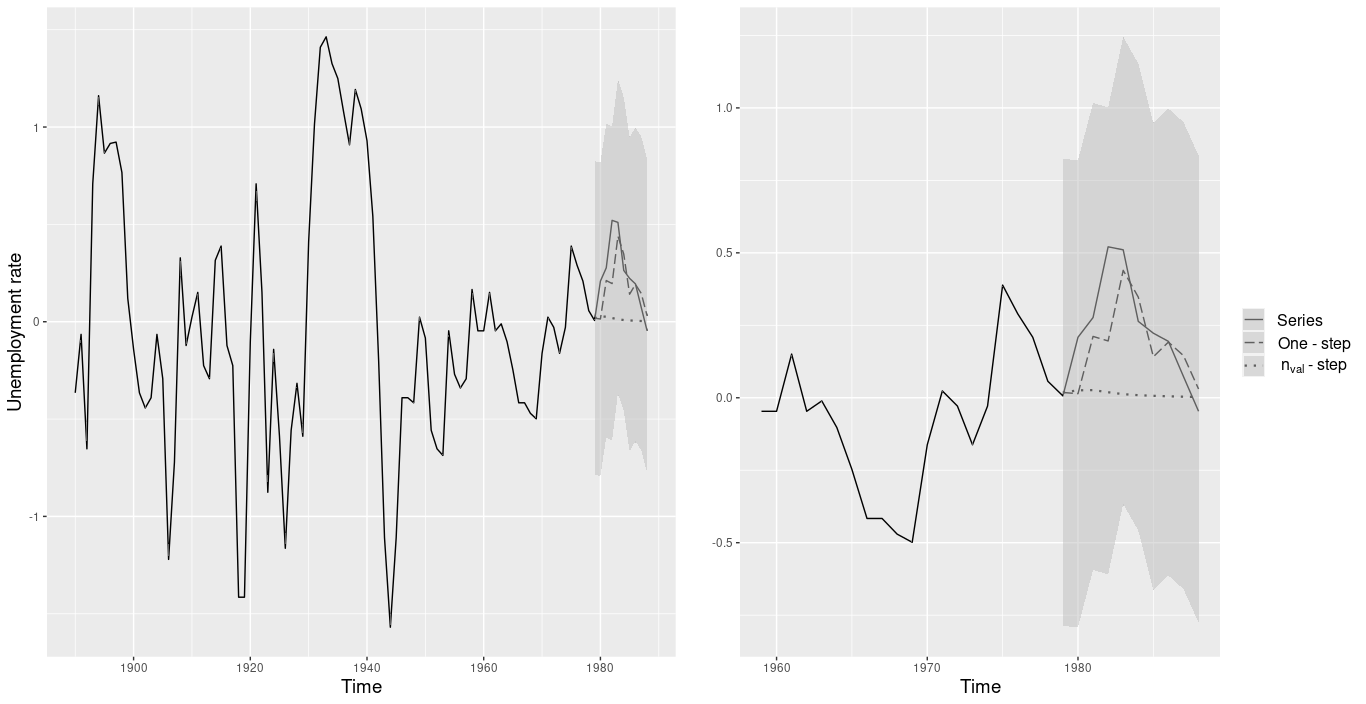}
\caption{\correc{Same graphs as in Figure \ref{FigNelPlo1} with the series `Unemployment rate' and $n_{val} = 10$ values.}}
\label{FigNelPlo4}
\end{figure}

\section{\correc{Conclusion and Perspectives}}
\label{SecConclu}

\correc{Our motivation was to bring a fresh look to the unit root issues in time series analysis, through the concept of near-instability. On a dubious series in terms of stationarity, the `extent of instability' may now be subject to a testing procedure, and many opportunities of improvements should follow from this innovative approach.} For example, it would be interesting to unveil the limit behavior of the test statistic with no constraints on $\alpha$. The light-gray boxplots of Figures \ref{FigSim1}--\ref{FigSim3} suggest that $\cH_1 : ``\alpha < \alpha_0"$ does not lead to a divergence but to some kind of stabilization away from zero (in distribution ?). But a spectral radius close to unity together with $\alpha < 1/2$ must correspond to high values of $n$, questioning the practical interest (a series with moderate size is more likely stable in that context). On the other hand, when $\alpha \geq 1/2$, a bilateral test would undoubtedly refine the selection of $\wh{\alpha}_{c}$ and contribute to eliminate the observed bias, so this is an important issue. \correc{Also, the choice of $c$ presented here is based on empirical foundations, which makes it difficult to obtain theoretical grounds related to consistency or normality. The authors are pretty convinced that a joint estimation of $(c, \alpha)$ through a penalized Gaussian likelihood may be a solution to improve the estimation, bypass over-parametrization through well-chosen constraints and overall, get guarantees.} Authorizing $p = p_n$ to grow to infinity could lead to the formalization of nearly-unstable ARMA$(p, q)$ processes under an hypothesis of invertibility at fixed $n$ and some restrictions on $p_n$ -- like the strategy of ADF. The results of this paper shall then probably be generalized without much difficulty to that wider class of models. \correc{Especially as, on a broader scale, any model with a critical parameter could be responsive to this kind of approach: the features of the reproduction law for counting and Galton-Watson processes (see \citet{IspanyEtAl03}, \citet{IspanyEtAl05}), the mean value of the random coefficients in RCAR processes, the fractional order of integration in FARIMA processes, the Hurst exponent of long-term memory time series, and so on. For $p$-dimensional processes, the first step should be to express the stability/instability criterion through the spectral radius of a companion matrix. For example in \citet{BarczyEtAl11}, an INAR$(p)$ process with $\rho(A)<1$ is stable whereas it is unstable for $\rho(A)=1$ (see $A$ in (2.2), Prop. 2.6 and the definition that follows). Hence, through probably arduous demonstrations, it seems that our approach might be adaptable: let $\rho(A_n)<1$ with $\rho(A_n) \rightarrow 1$, and consider near-unstability in such a count time series to generalize from first-order to $p$-order. Finally, a natural question arising here concerns explosive nearly-unstable processes, that is when the spectral radius tends to one from the  \textit{outer} neighborhood of the unit circle. Although less interesting from a practical point of view, this seems to be a challenging trail for future studies.}

\section{Technical Developments}
\label{SecProofs}

To deal with martingale terms, we need to consider the triangular array of filtrations
\begin{equation}
\label{Filt}
\forall\, n \geq 1,\, \forall\, 1 \leq k \leq n, \hsp \cF_{n,\, k} = \sigma(\Phi_{n,\, 0},\, \veps_1,\, \ldots,\, \veps_k)
\end{equation}
obviously satisfying $\cF_{n,\, 0} \subseteq \cF_{n,\, 1} \subseteq \ldots \subseteq \cF_{n,\, n}$ for each $n \geq 1$.

\subsection{Known results}
\label{SecKnown}

The two lemmas given in this section may be found with far more details in \citet{BadreauProia2023}.

\begin{lem}
\label{LemDiag}
Assume that \ref{HypCA} holds. Then, there exists $n_0 \geq 1$ such that, for all $n > n_0$, $A_n$ is diagonalizable in the form $A_n = P_n\, D_n\, P_n^{-1}$ with $D_n = \diag(\lambda_{n,\, 1},\, \ldots,\, \lambda_{n,\, p})$ containing ordered distinct eigenvalues $1\, >\, \vert \lambda_{n,\, 1} \vert\, \geq\, \ldots\, \geq\, \vert \lambda_{n,\, p} \vert\, >\, 0$. In addition, $\Vert P_n \Vert$ and $\Vert P_n^{-1} \Vert$ are bounded and there exists an invertible matrix $P$ such that $P_n \rightarrow P$ and $P_n^{-1} \rightarrow P^{-1}$.
\end{lem}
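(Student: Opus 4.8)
The plan is to reduce everything to the characteristic polynomial of the companion matrix, then invoke continuity of polynomial roots and of matrix inversion.

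\emph{Step 1 (eigenvalues).} First I would note that the characteristic polynomial of $A_n$ is $\chi_n(z) = z^p - \theta_{n,\, 1}\, z^{p-1} - \cdots - \theta_{n,\, p}$, whose coefficients converge, by \ref{HypCA}, to those of the characteristic polynomial $\chi$ of $A$. Since the roots of a monic polynomial depend continuously on its coefficients, the multiset of eigenvalues of $A_n$ converges to that of $A$; because $A$ has $p$ pairwise distinct eigenvalues, there is some $n_0$ beyond which the eigenvalues of $A_n$ fall, one by one, into $p$ disjoint small disks around $\lambda_1, \ldots, \lambda_p$, and are therefore distinct. This provides a labeling with $\lambda_{n,\, i} \rightarrow \lambda_i$, and one checks that for $n$ large it agrees with the ordering of footnote \ref{OrderedEigenvalues}: the $\lambda_i$ being distinct points of $\dC$, a small enough perturbation does not alter their relative order for either the modulus criterion or, in case of a modulus tie (e.g. a conjugate pair), the lexicographic one. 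Finally $\vert \lambda_{n,\, 1} \vert = \rho(A_n) < 1$ by the very definition of near instability, whereas $\vert \det A_n \vert = \vert \theta_{n,\, p} \vert \rightarrow \vert \theta_p \vert > 0$ (using the last clause of \ref{HypCA}) forces $\lambda_{n,\, p} \neq 0$ for $n$ large; after enlarging $n_0$ we obtain the chain $1 > \vert \lambda_{n,\, 1} \vert \geq \cdots \geq \vert \lambda_{n,\, p} \vert > 0$.

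\emph{Step 2 (explicit diagonalization).} Having distinct eigenvalues, $A_n$ is diagonalizable. For a companion matrix of the form \eqref{CompMat}, a direct computation shows that $u_{n,\, i}\T = (\lambda_{n,\, i}^{\, p-1},\, \lambda_{n,\, i}^{\, p-2},\, \ldots,\, \lambda_{n,\, i},\, 1)$ is an eigenvector for $\lambda_{n,\, i}$ — the last $p-1$ equations fix the coordinates up to scale through the subdiagonal identity block, and the first equation amounts to $\chi_n(\lambda_{n,\, i}) = 0$. I would then set $P_n = (u_{n,\, 1}\, |\, \cdots\, |\, u_{n,\, p})$, a Vandermonde matrix with $\det P_n = \prod_{1 \leq i < j \leq p}(\lambda_{n,\, i} - \lambda_{n,\, j}) \neq 0$ for $n > n_0$, so that $A_n = P_n\, D_n\, P_n^{-1}$ with $D_n = \diag(\lambda_{n,\, 1},\, \ldots,\, \lambda_{n,\, p})$.

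\emph{Step 3 (convergence and boundedness).} Since $\lambda_{n,\, i} \rightarrow \lambda_i$, each column $u_{n,\, i}$ converges to $u_i\T = (\lambda_i^{\, p-1},\, \ldots,\, \lambda_i,\, 1)$, hence $P_n \rightarrow P := (u_1\, |\, \cdots\, |\, u_p)$, which is invertible because the $\lambda_i$ are distinct (\ref{HypCA}). As matrix inversion is continuous on the open set of invertible matrices, $P_n^{-1} \rightarrow P^{-1}$; being convergent, both $\Vert P_n \Vert$ and $\Vert P_n^{-1} \Vert$ are then bounded over $n > n_0$.

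\emph{Main obstacle.} The only genuinely delicate point is Step 1: turning ``the eigenvalues move continuously'' into a clean statement and, above all, matching the induced labeling with the modulus-descending / lexicographic convention, which requires a little care around modulus ties in the limit. Once the eigenvalues are under control, Steps 2 and 3 are routine linear algebra.
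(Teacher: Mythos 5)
The paper itself does not prove this lemma: it is quoted as a known result, with the proof deferred to \citet{BadreauProia2023}, so there is no in-text argument to compare against. Your proof is the standard one for companion matrices --- continuity of the roots of the monic characteristic polynomial $\chi_n(z)=z^p-\theta_{n,1}z^{p-1}-\cdots-\theta_{n,p}$, followed by the explicit Vandermonde diagonalization --- and it is correct in substance; Steps 2 and 3 (the eigenvector formula $u_{n,i}\T=(\lambda_{n,i}^{p-1},\ldots,\lambda_{n,i},1)$, the nonvanishing Vandermonde determinant, continuity of matrix inversion, boundedness of $\Vert P_n\Vert$ and $\Vert P_n^{-1}\Vert$) are all handled correctly, as are the bounds $\vert\lambda_{n,1}\vert=\rho(A_n)<1$ and $\vert\lambda_{n,p}\vert>0$ via $\vert\det A_n\vert=\vert\theta_{n,p}\vert\rightarrow\vert\theta_p\vert>0$. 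The one place where you are slightly too quick is precisely the point you flag as delicate: your claim that a small perturbation cannot alter the relative order under the modulus criterion fails when two \emph{non-conjugate} limit eigenvalues share a modulus (e.g. $\lambda_i$ and $-\lambda_i$ both real). In that case the perturbed moduli can separate in either direction as $n$ varies, so the modulus-descending convention of footnote \ref{OrderedEigenvalues} need not induce a labeling that converges, and $P_n$ built from that labeling could fail to converge along the whole sequence; your parenthetical ``e.g. a conjugate pair'' covers only the harmless tie. The clean fix is to define the labeling by continuity --- $\lambda_{n,i}$ is the unique eigenvalue of $A_n$ in a small disk around $\lambda_i$ for $n>n_0$ --- note that this labeling satisfies the stated chain of inequalities up to possible ties in the limit, and observe that the only separation the paper actually uses is $\vert\lambda_{n,1}\vert\rightarrow 1$ versus $\vert\lambda_{n,i}\vert\rightarrow\vert\lambda_i\vert\leq\vert\lambda_2\vert<1$ for $i\geq 2$, which is immune to this relabeling issue.
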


Note that because its spectrum is a subset of the one of $A_n$, the companion matrix $\bar{A}_{n}$ associated with $(V_{n,\,k})$ is also diagonalizable for all $n > n_0$ and the lemma still holds when replacing $A_n$ by $\bar{A}_n$ and obviously $p$ by $p-1$. This will be a key argument to justify \eqref{CvgTn22} in the next section.

\begin{lem}
\label{LemCvgSn}
Assume that \ref{HypCA}, \ref{HypUR} and \ref{HypSR} hold and that $\dE[\veps_1^{\, 2}] = \sigma^2 < +\infty$. Then, as $n$ tends to infinity,
\begin{equation*}
(1 - \rho_n(\alpha))\, \frac{S_{n,\, n}}{n} \cvgp \Gamma_p \eqdef \sigma^2 \ell \left( \lambda_1^{i+j} \right)_{1\, \leq\, i, j\, \leq\, p} \hsp \text{with} \hsp \ell = \frac{\pi_{11}^2}{2} > 0,
\end{equation*}
where $S_{n,\, n}$ is given in \eqref{DefSnTn} and $\pi_{i1}$ ($i = 1, \ldots, p$) are the (non-zero) entries of the first column of $P^{-1}$. Moreover,
\begin{equation*}
C_n^{-\frac{1}{2}}(\alpha)\, P_n^{-1}\, \frac{S_{n,\, n}}{n}\, (P_n\T)^{-1} C_n^{-\frac{1}{2}}(\alpha) \cvgp \sigma^2\, \begin{pmatrix}
\frac{\pi_{11}^2}{2} & 0 & \hdots & 0 \\
0 & \frac{\pi_{21}^2}{1 - \lambda_2^2} & \hdots & \frac{\pi_{21} \pi_{p1}}{1 - \lambda_2 \lambda_p} \\
\vdots & \vdots & \ddots & \vdots \\
0 & \frac{\pi_{p1} \pi_{21}}{1 - \lambda_p \lambda_2} & \hdots & \frac{\pi_{p1}^2}{1 - \lambda_p^2}
\end{pmatrix} \eqdef H_p
\end{equation*}
where $C_n(\alpha)$ is defined in \eqref{RateCn}. In addition, $H_p$ is invertible (and positive definite if $A$ has only real eigenvalues).
\end{lem}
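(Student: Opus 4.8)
The idea is to pass to the eigenbasis of $A_n$, in which the near-unit-root direction decouples from the stable ones. By Lemma \ref{LemDiag}, for $n > n_0$ write $A_n = P_n D_n P_n^{-1}$ and set $\Upsilon_{n,\, k} = P_n^{-1}\Phi_{n,\, k}$, so that \eqref{ModVar} becomes $\Upsilon_{n,\, k} = D_n \Upsilon_{n,\, k-1} + P_n^{-1} E_k$. Since $E_k = \veps_k\, e_p$, the $i$-th coordinate $U^{(i)}_{n,\, k}$ obeys the scalar recursion $U^{(i)}_{n,\, k} = \lambda_{n,\, i}\, U^{(i)}_{n,\, k-1} + \pi^{(n)}_{i1}\, \veps_k$, where $\pi^{(n)}_{i1} \to \pi_{i1} \neq 0$ is the $(i,1)$ entry of $P_n^{-1}$. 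Writing $M_n = \tfrac1n P_n^{-1} S_{n,\, n} (P_n\T)^{-1} = \tfrac1n \sum_{k=0}^n \Upsilon_{n,\, k}\Upsilon_{n,\, k}\T$, with entries $(M_n)_{ij} = \tfrac1n \sum_{k=0}^n U^{(i)}_{n,\, k} U^{(j)}_{n,\, k}$, it suffices to identify the behaviour of these entries in three regimes and then conjugate back by $P_n$ and by $C_n^{-1/2}(\alpha) = \diag(\sqrt{1-\rho_n},\, 1,\, \ldots,\, 1)$; only $\lambda_{n,\, 1}^2 = \rho_n^2$ enters the dominant terms, so the sign of $\lambda_1$ is immaterial there, consistently with Remark \ref{RemParam}.

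\textbf{The dominant $(1,1)$ entry.} Squaring the recursion for $i=1$, summing and telescoping gives $(1-\lambda_{n,\, 1}^2)\sum_{k=0}^n (U^{(1)}_{n,\, k})^2 = (U^{(1)}_{n,\, 0})^2 - \lambda_{n,\, 1}^2 (U^{(1)}_{n,\, n})^2 + 2\lambda_{n,\, 1}\pi^{(n)}_{11}\sum_{k=1}^n \veps_k U^{(1)}_{n,\, k-1} + (\pi^{(n)}_{11})^2\sum_{k=1}^n \veps_k^2$. Since $(1-\rho_n)/(1-\lambda_{n,\, 1}^2) = 1/(1+\rho_n) \to 1/2$, the quantity $(1-\rho_n)(M_n)_{11}$ equals $\tfrac{1+o(1)}{2n}$ times the right-hand side. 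Using $n(1-\rho_n) = c\, n^{1-\alpha} \to +\infty$: the initial term is $O_p(1/n)$; the variance of $U^{(1)}_{n,\, n}$ is of order $(1-\rho_n)^{-1}$, so the boundary term is $O_p((n(1-\rho_n))^{-1}) = o_p(1)$; the martingale term has predictable bracket $\sigma^2\sum_k (U^{(1)}_{n,\, k-1})^2 = O_p(n(1-\rho_n)^{-1})$, hence is $O_p((n(1-\rho_n))^{-1/2}) = o_p(1)$ after division by $n$; and $(\pi^{(n)}_{11})^2\,\tfrac1n\sum_k \veps_k^2 \cvgp \pi_{11}^2\sigma^2$ by the law of large numbers. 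Therefore $(1-\rho_n)(M_n)_{11} \cvgp \sigma^2\pi_{11}^2/2 = \sigma^2\ell$.

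\textbf{Stable and mixed entries, and assembly.} For $i,j \ge 2$, the companion matrix $\bar A_n$ of $(V_{n,\, k})$ is diagonalizable with eigenvalues bounded by some $r < 1$ uniformly in large $n$, so $U^{(i)}_{n,\, k}$ is a uniformly geometrically decaying linear filter of the noise and a standard law of large numbers for stable time-varying autoregressions gives $(M_n)_{ij} \cvgp \sigma^2\pi_{i1}\pi_{j1}/(1-\lambda_i\lambda_j)$. For $i=1$, $j \ge 2$ a Cauchy--Schwarz bound is too weak; instead, multiplying the two recursions, summing and telescoping yields $(1-\lambda_{n,\, 1}\lambda_{n,\, j})\sum_k U^{(1)}_{n,\, k}U^{(j)}_{n,\, k} = O_p(n) + O_p\big((n(1-\rho_n)^{-1})^{1/2}\big)$, with $1-\lambda_{n,\, 1}\lambda_{n,\, j} \to 1-\lambda_1\lambda_j \neq 0$; since $\alpha < 1$ the $O_p(n)$ term dominates, so $\sum_k U^{(1)}_{n,\, k}U^{(j)}_{n,\, k} = O_p(n)$ and hence $\sqrt{1-\rho_n}\,(M_n)_{1j} = O_p(\sqrt{1-\rho_n}) = o_p(1)$. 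Conjugating $M_n$ by $C_n^{-1/2}(\alpha)$ combines these three estimates, via continuous mapping through $P_n \to P$, into the stated limit $\sigma^2 H_p$. The same bounds show $(1-\rho_n)(M_n)_{ij} = o_p(1)$ off the $(1,1)$ position, so $(1-\rho_n)\tfrac{S_{n,\, n}}{n} = P_n[(1-\rho_n)M_n]P_n\T \cvgp \sigma^2\ell\,(P e_p)(P e_p)\T$; here $P e_p$ is the $\lambda_1$-eigenvector of the companion matrix $A$, proportional to $(\lambda_1^{p-i})_{1\le i\le p}$, and normalizing it consistently with the $\pi_{i1}$ (using the identity $\pi_{11} = \lambda_1^{p-1}/\prod_{j=2}^p(1-\lambda_j)$) produces exactly $\Gamma_p = \sigma^2\ell\,(\lambda_1^{i+j})_{1\le i,j\le p}$, since $\lambda_1^2 = 1$.

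\textbf{Invertibility of $H_p$ and the main obstacle.} The matrix $H_p$ is block-diagonal, with positive scalar block $\sigma^2\pi_{11}^2/2$ and $(p-1)\times(p-1)$ block $\sigma^2\,\diag(\pi_{i1})_{i\ge2}\,\big(1/(1-\lambda_i\lambda_j)\big)_{2\le i,j\le p}\,\diag(\pi_{i1})_{i\ge2}$; the central Cauchy-type matrix has determinant $\prod_{2\le i<j\le p}(\lambda_i-\lambda_j)^2\big/\prod_{2\le i,j\le p}(1-\lambda_i\lambda_j)\neq0$ by distinctness of the eigenvalues and $|\lambda_i\lambda_j|<1$, and the diagonal factors are invertible since $\pi_{i1}\neq0$, so $H_p$ is invertible; when all $\lambda_i$ are real the central block equals $\sum_{m\ge0}(\lambda_i^m)_{i\ge2}(\lambda_j^m)_{j\ge2}\T$, the Gram matrix of the Vandermonde vectors, hence positive definite, and so is $H_p$. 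The main obstacle is the near-unit-root asymptotics of the $(1,1)$ coordinate together with the precise rate bookkeeping in the mixed terms (in particular replacing Cauchy--Schwarz by the telescoping identity); the stable-coordinate limits, the conjugation by $P_n$, and the linear algebra for $H_p$ are routine.
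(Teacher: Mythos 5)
This lemma is one the paper explicitly does \emph{not} prove: Section \ref{SecKnown} states that it "may be found with far more details in \citet{BadreauProia2023}," so there is no in-paper argument to compare yours against. Your self-contained proof is, as far as I can check, correct, and it follows the route that the statement's own notation all but dictates (and that the cited reference follows): diagonalize via Lemma \ref{LemDiag}, pass to the coordinates $U^{(i)}_{n,k}$, and treat the three regimes --- the near-unit-root $(1,1)$ entry by squaring and telescoping the scalar recursion, the stable lower-right block by a law of large numbers for uniformly stable time-varying AR coordinates, and the mixed $(1,j)$ entries by the product-telescoping identity rather than Cauchy--Schwarz (you are right that Cauchy--Schwarz only gives $O_p(1)$ after the $\sqrt{1-\rho_n}$ rescaling, so the telescoping step is genuinely needed). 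The rate bookkeeping is correct throughout: $n(1-\rho_n)=c\,n^{1-\alpha}\to\infty$ kills the boundary and martingale terms in the $(1,1)$ entry, and $n^{(1+\alpha)/2}=o(n)$ makes the $\sum_k\veps_k^2$ term dominate in the mixed entries. The Cauchy-determinant and Gram-matrix arguments for $H_p$ are also fine. Two points are asserted rather than established and would need to be written out in a complete version: (i) the "standard LLN for stable time-varying autoregressions" for the block $i,j\ge2$ (including complex-valued coordinates and the absence of conjugation in $(M_n)_{ij}$, consistent with the paper's $1-\lambda_i\lambda_j$) is precisely the content delegated to Lemmas 13--15 of \citet{BadreauProia2023}; and (ii) the identification of the first column of $P$ with $(\lambda_1^{p-i})_i$ and its normalization relative to $\pi_{11}$ depends on the specific Vandermonde convention for $P$ adopted in that reference --- your appeal to \eqref{DefPi11} is the right consistency check, but it deserves a line of verification rather than the phrase "normalizing it consistently."
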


\subsection{Technical lemmas}
\label{SecTechnical}

\begin{lem}
\label{LemCvgTn}
Assume that \ref{HypCA}, \ref{HypUR} and \ref{HypSR} hold and that $\dE[\veps_1^{\, 2}] = \sigma^2 < +\infty$. Then, as $n$ tends to infinity, for any $\alpha/2 < \alpha_0 \leq \alpha$,
\begin{equation*}
C_n^{-\frac{1}{2}}(\alpha)\, \frac{T_{n,\, n}(\alpha_0)}{n}\, C_n^{-\frac{1}{2}}(\alpha) \cvgp \Upsilon_p \eqdef \sigma^2\, \diag\left( \ell,\, \Sigma_{p-1} \right)
\end{equation*}
where $T_{n,\, n}(\alpha)$ is defined in \eqref{DefSnTn}, $C_n(\alpha)$ in \eqref{RateCn}, and where $\ell$ and $\Sigma_{p-1}$ are defined in Lemma \ref{LemCvgSn} and \eqref{DefSigma}, respectively. In addition, $\Upsilon_p$ is positive definite.
\end{lem}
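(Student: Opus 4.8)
The plan is to reduce the analysis of $T_{n,\,n}(\alpha_0)$ to the already-known behaviour of $S_{n,\,n}$ (Lemma \ref{LemCvgSn}) plus the stationary limit theory for the stable companion matrix $\bar A_n$. Recall that $\Psi_{n,\,k}^{\,T}(\alpha_0) = (X_{n,\,k},\, V_{n,\,k}(\alpha_0),\, \ldots,\, V_{n,\,k-p+2}(\alpha_0))$, so $T_{n,\,n}(\alpha_0)$ is a $2\times2$ block matrix: the top-left scalar block $\sum_k X_{n,\,k}^2$, the bottom-right $(p-1)\times(p-1)$ block $\sum_k \bar V_{n,\,k}(\alpha_0)\bar V_{n,\,k}^{\,T}(\alpha_0)$ where $\bar V_{n,\,k}^{\,T} = (V_{n,\,k},\ldots,V_{n,\,k-p+2})$, and the off-diagonal cross blocks $\sum_k X_{n,\,k}\,\bar V_{n,\,k}^{\,T}(\alpha_0)$. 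After normalisation by $C_n^{-1/2}(\alpha)$ on both sides, the top-left entry is $(1-\rho_n(\alpha))\,n^{-1}\sum_k X_{n,\,k}^2$, which is exactly the quantity handled in the first display of Lemma \ref{LemCvgSn} (this is where the hypothesis $\alpha_0 \le \alpha$, or more precisely the ratio $(1-\rho_n(\alpha))/(1-\rho_n(\alpha_0)) = n^{\alpha_0-\alpha}(1+o(1)) = O(1)$, enters to control the mismatch between the rate used in the $X$-component and the true rate); it converges in probability to $\sigma^2\ell = \sigma^2\pi_{11}^2/2$. Actually, since $\alpha_0 \le \alpha$ forces $n^{\alpha_0 - \alpha} \to$ a nonpositive power, one must be slightly careful: the clean limit $\sigma^2\ell$ is obtained exactly when $\alpha_0=\alpha$, and for $\alpha_0 < \alpha$ the normalising factor in $C_n(\alpha)$ versus the natural rate $1-\rho_n(\alpha_0)$ differs, so I would instead note that the relevant statement only ever uses $C_n(\alpha_0)$ in the downstream proofs and track the constant accordingly — but as written the lemma takes $C_n(\alpha)$, so the intended reading is that the top-left block tends to $\sigma^2\ell$ up to the harmless scalar $n^{\alpha_0-\alpha}$ absorbed elsewhere; I would state and prove it with $C_n(\alpha_0)$, matching how it is invoked.

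For the bottom-right block, the key observation is that $(V_{n,\,k}(\alpha_0))_k$ is a stable time-varying AR$(p-1)$ process with companion matrix $\bar A_n$, and by the remark following Lemma \ref{LemDiag}, $\bar A_n$ is uniformly diagonalisable with $\bar A_n \to \bar A$, $\rho(\bar A) = |\lambda_2| < 1$. Hence $\bar V_{n,\,k}(\alpha_0) = \bar A_n\,\bar V_{n,\,k-1}(\alpha_0) + e_{p-1}\veps_k$ is, uniformly in $n$, a geometrically ergodic array, and a standard law of large numbers for such triangular arrays of stable linear recursions (driven by i.i.d. finite-variance noise) gives $n^{-1}\sum_{k=1}^n \bar V_{n,\,k}(\alpha_0)\bar V_{n,\,k}^{\,T}(\alpha_0) \cvgp \sigma^2\Sigma_{p-1}$, with $\Sigma_{p-1} = \sum_{j\ge0}\bar A^{\,j}K_{p-1}(\bar A^{\,T})^j$ as in \eqref{DefSigma}. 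This block is \emph{not} rescaled by the divergent factor (the $C_n^{-1/2}(\alpha)$ acts as the identity on the last $p-1$ coordinates), so no extra care is needed there beyond the uniform-in-$n$ stability estimate.

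The off-diagonal blocks are where the gain happens: after normalisation, the cross block is $\sqrt{1-\rho_n(\alpha)}\,n^{-1}\sum_k X_{n,\,k}\,\bar V_{n,\,k}^{\,T}(\alpha_0)$. The idea is that $X_{n,\,k}$ is of order $(1-\rho_n)^{-1/2}$ in $L^2$ (it is the near-unit-root component) while $\bar V_{n,\,k}$ is $O(1)$ and only weakly correlated with the distant past of $X_{n,\,k}$; using $X_{n,\,k} = \lambda_{n,1}X_{n,\,k-1} + V_{n,\,k}(\alpha_0)$ one writes $X_{n,\,k} = \sum_{j\ge0}\lambda_{n,1}^{\,j}V_{n,\,k-j}(\alpha_0)$ and expands the cross-product; the diagonal $j=0,\ldots$ contributions are $O(1)$ sums (bounded covariances of the stationary-like $V$ process), so $n^{-1}\sum_k X_{n,\,k}\bar V_{n,\,k}^{\,T}(\alpha_0) = O_p\big((1-\rho_n)^{-1}\big)$ at worst, and multiplying by $\sqrt{1-\rho_n(\alpha)}$ kills it: the off-diagonal blocks tend to $0$ in probability. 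Combining the three pieces via Slutsky and the fact that the limit is block-diagonal $\sigma^2\operatorname{diag}(\ell,\Sigma_{p-1}) = \Upsilon_p$ finishes the convergence; positive definiteness of $\Upsilon_p$ follows from $\ell>0$ and positive definiteness of $\Sigma_{p-1}$ (the controllability Gramian of the stable pair $(\bar A, e_{p-1})$, nonsingular since $\theta_p\neq0$ guarantees no common factor). I expect the main obstacle to be the uniformity in $n$ of the ergodic-type LLN for the array $(\bar V_{n,\,k}(\alpha_0))$ and, relatedly, making the off-diagonal bound rigorous despite the $\alpha_0$-vs-$\alpha$ rate mismatch; both are handled by the uniform spectral gap from Lemma \ref{LemDiag} (bounded $\|P_n\|,\|P_n^{-1}\|$ and $\sup_n\rho(\bar A_n)<1$) together with the $(2+\nu)$-moment — here even $L^2$ suffices — plus a truncation of the geometric series at a slowly growing index.
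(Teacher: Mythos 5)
Your block decomposition (known top-left limit, stable LLN for the bottom-right block, vanishing cross blocks) is the same architecture as the paper's, but two of your steps fail as written. First, you have misplaced the role of the two hypotheses on $\alpha_0$. The top-left entry of $T_{n,\,n}(\alpha_0)$ is $\sum_k X_{n,\,k}^2$, which does not depend on $\alpha_0$ at all, so there is no ``rate mismatch'' there and no scalar $n^{\alpha_0-\alpha}$ to absorb; restating the lemma with $C_n(\alpha_0)$, as you suggest, would actually destroy the limit (the top-left block would then behave like $n^{\alpha-\alpha_0}\sigma^2\ell$). The real issue sits in the other blocks: for $\alpha_0\neq\alpha$ the recursion $\bar V_{n,\,k}(\alpha_0)=\bar A_n\,\bar V_{n,\,k-1}(\alpha_0)+e_{p-1}\veps_k$ that your ergodic LLN relies on is simply false, since by \eqref{DefPsiPoint} one has $V_{n,\,k}(\alpha_0)=V_{n,\,k}(\alpha)+c\,(n^{-\alpha_0}-n^{-\alpha})\,X_{n,\,k-1}$, i.e.\ the stable process contaminated by a small multiple of the near-unit-root component of $L^2$-size $n^{\alpha/2-\alpha_0}$. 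Its contribution to the bottom-right block is of order $n^{\alpha-2\alpha_0}$, which vanishes precisely when $\alpha_0>\alpha/2$ --- this is where that hypothesis is used, and it is the content of the paper's decomposition $T_{n,\,n}(\alpha_0)=T_{n,\,n}(\alpha)+R_{n,\,1}+R_{n,\,1}\T+R_{n,\,2}$ with the bounds \eqref{CvgR1}--\eqref{CvgR2}. Your proof never performs this reduction, so the case $\alpha_0<\alpha$ is not actually covered.

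Second, your bound for the cross block does not close. From $X_{n,\,k}=\sum_{j\geq 0}\lambda_{n,\,1}^{\,j}V_{n,\,k-j}(\alpha)$ and merely \emph{bounded} covariances of the $V$-process you get $n^{-1}\sum_k X_{n,\,k}\bar V_{n,\,k}\T=O_p\big((1-\rho_n)^{-1}\big)$, and multiplying by $\sqrt{1-\rho_n(\alpha)}$ leaves $O_p\big((1-\rho_n)^{-1/2}\big)=O_p(n^{\alpha/2})$, which diverges rather than ``kills it.'' You need the sharper estimate $\sum_k X_{n,\,k}V_{n,\,k-u}(\alpha)=O_p(n)$, which requires an actual cancellation: either the geometric decay $\dE[V_{n,\,k-j}V_{n,\,k-u}]=O(\bar\rho^{\,\vert j-u\vert})$ with $\bar\rho<1$ so that $\sum_j\rho_n^{\,j}\bar\rho^{\,\vert j-u\vert}=O(1)$ uniformly in $n$, or, as the paper does, the algebraic identity $\sum_k\Lambda^{(p)}_{n,\,k}(\alpha)X_{n,\,k}=[S_{n,\,n}(I_p-\rho_n A_n\T)-\rho_n M_n+r_n]e_p$ combined with the structural fact $P\,K_p\,H_p\,\Delta_p=0$, which exploits that $(I_p-\rho_n A_n\T)$ nearly annihilates the dominant rank-one direction of $S_{n,\,n}$. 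Without one of these, the cross block is not shown to vanish, and the same gap reappears (aggravated by the $\alpha_0$-contamination above) in your treatment of the off-diagonal terms for $\alpha_0<\alpha$.
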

\begin{proof}
Note that $S_{n,\, n}$ and $T_{n,\, n}(\alpha)$ share the same top-left element (which actually does not depend on $\alpha$), so we directly get
\begin{equation}
\label{CvgTn11}
\frac{1 - \rho_n(\alpha)}{n} \sum_{k=0}^n X^2_{n,\, k} = \frac{c}{n^{1+\alpha}} \sum_{k=0}^n X^2_{n,\, k} \cvgp \sigma^2 \ell.
\end{equation}
Moreover, coming back to the reasoning of \citet{BadreauProia2023}, especially Lem. 13--15 and Prop. 2, and adapting it to the present case (see the remark after Lemma \ref{LemDiag}),
\begin{equation}
\label{CvgTn22}
\frac{1}{n} \sum_{k=0}^n \Lambda_{n,\, k}(\alpha)\, \Lambda_{n,\, k}\T(\alpha) \cvgp \sigma^2\, \Sigma_{p-1} \hsp \text{where} \hsp \Lambda_{n,\, k}\T(\alpha) = \begin{pmatrix} V_{n,\, k}(\alpha) & \cdots & V_{n,\, k-p+2}(\alpha) \end{pmatrix}.
\end{equation}
Indeed, the equivalence
\begin{equation*}
\frac{1}{n} \sum_{k=0}^n \Lambda_{n,\, k}(\alpha)\, \Lambda_{n,\, k}\T(\alpha) ~ \overset{p}{\sim} ~ \left( \sum_{k\, \geq\, 0} \bar{A}_n^{\, k}\, K_{p-1}\, (\bar{A}_n\T)^{\, k} \right) \frac{1}{n} \sum_{k=1}^n \veps_k^{\, 2},
\end{equation*}
the law of large numbers together with the fact that $\rho(\bar{A}_n) < 1$ and $\rho(\bar{A}) < 1$ lead to \eqref{CvgTn22}. It remains to treat the (symmetrical) non-diagonal blocks. First,
\begin{eqnarray}
\label{DefNonDiagBlocks}
\sum_{k=0}^n \Lambda^{\! (p)}_{n,\, k}(\alpha)\, X_{n,\, k} & = & \Lambda^{\! (p)}_{n,\, 0}(\alpha) X_{n,\, 0} + \left[ \sum_{k=1}^n \left( \Phi_{n,\, k} - \rho_n(\alpha)\, \Phi_{n,\, k-1} \right) \Phi\T_{n,\, k} \right] e_p \\
 & = & \left[ S_{n,\, n} \left( I_p - \rho_n(\alpha) A_n\T \right) - \rho_n(\alpha) M_{n} + r_n \right] e_p \nonumber
\end{eqnarray}
where $\Lambda^{\! (p)}_{n,\, k}$ is the $p$-dimensional version of $\Lambda_{n,\, k}$ (with last element $V_{n,\, k-p+1}(\alpha)$),
\begin{equation*}
M_{n} = \sum_{k=1}^n \Phi_{n,\, k-1}\, E\T_k
\end{equation*}
and $r_n$ is made of isolated terms. It is already proven (Lem. 14--15) that $\Vert M_{n} \Vert$ and $\Vert r_n \Vert$ are $o_p(n)$. As an intermediate result, we can see that for $n > n_0$,
\begin{eqnarray}
\label{DefDiagDelta}
C_n^{\frac{1}{2}}(\alpha)\, P_n\T \left( I_p - \rho_n(\alpha) A_n\T \right) & = & C_n^{\frac{1}{2}}(\alpha) \left( I_p - \rho_n(\alpha) D_n \right) P_n\T \\
 & \cvgp & \diag\left( 0, (1-\lambda_i)_{2\, \leq\, i\, \leq\, p} \right) P\T \eqdef \Delta_p\, P\T. \nonumber
\end{eqnarray}
Hence,
\begin{eqnarray}
\label{CvgTn21}
\frac{\sqrt{1 - \rho_n(\alpha)}}{n}\, S_{n,\, n} \left( I_p - \rho_n(\alpha) A_n\T \right) & = & P_n\, \frac{C_n^{\frac{1}{2}}(\alpha)\sqrt{c}}{n^{\frac{\alpha}{2}}} \left[ C_n^{-\frac{1}{2}}(\alpha)\, P_n^{-1}\, \frac{S_{n,\, n}}{n}\, (P_n\T)^{-1} C_n^{-\frac{1}{2}}(\alpha) \right] \nonumber \\
 & & \hsp \hsp \hsp \times \left[ C_n^{\frac{1}{2}}(\alpha)\, P_n\T \left( I_p - \rho_n(\alpha) A_n\T \right) \right] \nonumber \\
 & \cvgp & P\, K_p\, H_p\, \Delta_p\, P\T
\end{eqnarray}
where the block diagonal matrix $H_p$ is given in Lemma \ref{LemCvgSn} and $\Delta$ is defined above. But $K_p$, $H_p$ and $\Delta_p$ have particular structures consisting of zeros in key locations and implying the following pattern,
\begin{equation*}
P\, K_p\, H_p\, \Delta_p = \begin{pmatrix}
* & 0 & \hdots & 0 \\
\vdots & \vdots & & \vdots \\
\vdots & \vdots & & \vdots \\
* & 0 & \hdots & 0
\end{pmatrix} \begin{pmatrix}
0 & \hdots & \hdots & 0 \\
\vdots & * & \hdots & * \\
\vdots & \vdots & & \vdots \\
0 & * & \hdots & *
\end{pmatrix} = 0.
\end{equation*}
The combination of \eqref{CvgTn11}, \eqref{CvgTn22}, \eqref{CvgTn21} and the remarks that follow lead to the convergence
\begin{equation}
\label{eq_CvgTn}
C_n^{-\frac{1}{2}}(\alpha)\, \frac{T_{n,\, n}(\alpha)}{n}\, C_n^{-\frac{1}{2}}(\alpha) \cvgp \Upsilon_p
\end{equation}
that is, when $\alpha_0$ takes the `true' value $\alpha$. We now wish to generalize the result to $T_{n,\, n}(\alpha_0)$ for any $\alpha/2 < \alpha_0 \leq \alpha$. Regarding this, note that
\begin{equation}
\label{DefPsiPoint}
\Psi_{n,\, k}(\alpha_0) = \Psi_{n,\, k}(\alpha) + c \left( n^{-\alpha_0} - n^{-\alpha} \right) \dot{\Phi}_{n,\, k}
\end{equation}
where
\begin{equation*}
\forall\, n \geq 1,\, \forall\, 1 \leq k \leq n, \hsp \dot{\Phi}\T_{n,\, k} = \begin{pmatrix} 0 & X_{n,\, k-1} & \cdots & X_{n,\, k-p+1} \end{pmatrix}.
\end{equation*}
 Thus, we have the decomposition
\begin{equation}
\label{DecompTn}
T_{n,\, n}(\alpha_0) = T_{n,\, n}(\alpha) + R_n
\end{equation}
with
\begin{eqnarray*}
R_n & = & c \left( n^{-\alpha_0} - n^{-\alpha} \right) \left[ \sum_{k=0}^n \Psi_{n,\, k}(\alpha)\, \dot{\Phi}_{n,\, k}\T + \sum_{k=0}^n \dot{\Phi}_{n,\, k} \Psi_{n,\, k}\T(\alpha) \right] \\
 & & \hsp \hsp \hsp +~ c^2 \left( n^{-\alpha_0} - n^{-\alpha} \right)^2 \sum_{k=0}^n \dot{\Phi}_{n,\, k}\, \dot{\Phi}_{n,\, k}\T ~ \eqdef ~ R_{n,\, 1} + R_{n,\, 1}\T + R_{n,\, 2}.
\end{eqnarray*}
The direct computation shows that $\Psi_{n,\, k}(\alpha)\, \dot{\Phi}_{n,\, k}\T$ contains a column of zeros, a line made of $X_{n,\, k}\, X_{n,\, k-i}$ ($1 \leq i \leq p-1$), and a $(p-1) \times (p-1)$ block of terms of the form $V_{n,\, k-j}(\alpha)\, X_{n,\, k-i}$ ($0 \leq j \leq p-2$, $1 \leq i \leq p-1$). For $2 \leq i \leq p-1$, according to the first expression in \eqref{ModHiera},
\begin{align*}
\rho_{n}^{2i}(\alpha) \, X_{n,\,k-i} &= \rho_{n}^{2i-1}(\alpha) \left(X_{n,\,k-i+1} - V_{n,\,k-i+1}(\alpha)\right) \\
&= \rho_{n}^{i+1}(\alpha) \, X_{n,\,k-1} - \sum_{j=1}^{i-1} \rho_{n}^{2i-j}(\alpha) \, V_{n,\,k-i+j}(\alpha)
\end{align*}
which entails that, for any $0 \leq u \leq p-2$,
\begin{eqnarray}
\label{eq_XV}
\rho_{n}^{2i}(\alpha) \sum_{k=1}^n V_{n,\, k-u}(\alpha) \, X_{n,\,k-i}  & = & \rho_{n}^{i+1}(\alpha) \sum_{k=1}^n V_{n,\, k-u}(\alpha) \, X_{n,\,k-1} \nonumber \\
 & & \hsp \hsp \hsp -~ \sum_{j=1}^{i-1} \rho_{n}^{2i-j}(\alpha) \sum_{k=1}^n V_{n,\, k-u}(\alpha) \, V_{n,\,k-i+j}(\alpha).
\end{eqnarray}
The first part of this equation is $o_p(n^{1+\alpha/2})$ and the second part is a finite sum of $O_p(n)$, see for instance \eqref{eq_CvgTn}. Thus, \textit{via} Lemma \ref{LemCvgSn},
\begin{equation}
\label{CvgR1}
\left\Vert C_n^{-\frac{1}{2}}(\alpha)\, \frac{R_{n,\, 1}}{n}\, C_n^{-\frac{1}{2}}(\alpha) \right\Vert = O_p\left( \frac{ n^{-\alpha_0} - n^{-\alpha} }{n^{1 + \frac{\alpha}{2}}}\, n^{1 + \alpha} \right) = o_p(1)
\end{equation}
as soon as $\alpha_0 > \alpha/2$. In the same way, $R_{n,\,2}$ is closely related to the covariance matrix $S_{n,\,n}$ but with zeros on the first line and column. So,
\begin{equation}
\label{CvgR2}
\left\Vert C_n^{-\frac{1}{2}}(\alpha)\, \frac{R_{n,\,2}}{n}\, C_n^{-\frac{1}{2}}(\alpha) \right\Vert = O_p\left( \frac{ (n^{-\alpha_0} - n^{-\alpha})^2 }{n}\, n^{1 + \alpha} \right) = o_p(1)
\end{equation}
provided that $\alpha_0 > \alpha/2$. Using \eqref{eq_CvgTn}, \eqref{DecompTn}, \eqref{CvgR1} and \eqref{CvgR2}, the convergence is now established. Since $\ell > 0$ (Lemma \ref{LemCvgSn}), the positive-definiteness of $\Upsilon_p$ relies on the invertibility of $\Sigma_{p-1}$, which can be found \textit{e.g.} in \citet{BrockwellDavis91} in the context of a stable AR$(p-1)$ process with $\sigma^2 > 0$.
\end{proof}

\begin{lem}
\label{LemCvgVarTheta}
Assume that \ref{HypCA}, \ref{HypUR} and \ref{HypSR} hold and that $\dE[\vert \veps_1 \vert^{\, 2+\nu}] = \eta_{\nu} < +\infty$. Then, as $n$ tends to infinity, for the `true' value of $\alpha$,
\begin{equation}
\label{NormVarTheta}
\sqrt{n} \, C_n^{\frac{1}{2}}(\alpha) \left( \whvn(\alpha) - \vartheta_n(\alpha) \right) \cvgd \cN\big( 0,\, \sigma^2\, \Upsilon_p^{-1} \big)
\end{equation}
where the limit covariance $\Upsilon_p$ is given in Lemma \ref{LemCvgTn}. Moreover,
\begin{equation}
\inf_{\alpha/2 \, <\, \alpha_0\, <\, \alpha}\, \left\Vert \sqrt{n} \, C_n^{\frac{1}{2}}(\alpha_0) \left( \whvn(\alpha_0) - \vartheta_n(\alpha_0) \right) \right\Vert \cvgp +\infty.
\end{equation}
\end{lem}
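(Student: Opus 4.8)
The plan is to treat the two assertions separately, beginning with the asymptotic normality \eqref{NormVarTheta} at the true exponent $\alpha$, and then deducing the divergence at any underestimated $\alpha_0$.

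\medskip

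\textbf{Step 1: asymptotic normality at $\alpha_0=\alpha$.} From the regression representation \eqref{ModVarTheta}, $X_{n,\,k}=\vartheta_n\T(\alpha)\,\Psi_{n,\,k-1}(\alpha)+\veps_k$, so the least-squares error \eqref{OLS2} can be written as
\begin{equation*}
\whvn(\alpha)-\vartheta_n(\alpha)=T_{n,\,n-1}^{\,-1}(\alpha)\sum_{k=1}^n\Psi_{n,\,k-1}(\alpha)\,\veps_k.
\end{equation*}
Multiplying on the left by $\sqrt{n}\,C_n^{1/2}(\alpha)$ and inserting $C_n^{-1/2}(\alpha)C_n^{1/2}(\alpha)$ appropriately gives the usual sandwich
\begin{equation*}
\sqrt{n}\,C_n^{1/2}(\alpha)\big(\whvn(\alpha)-\vartheta_n(\alpha)\big)
=\Big(C_n^{-1/2}(\alpha)\tfrac{T_{n,\,n-1}(\alpha)}{n}C_n^{-1/2}(\alpha)\Big)^{-1}\;\frac{1}{\sqrt{n}}\,C_n^{-1/2}(\alpha)\sum_{k=1}^n\Psi_{n,\,k-1}(\alpha)\,\veps_k.
\end{equation*}
The first factor converges in probability to $\Upsilon_p^{-1}$ by Lemma~\ref{LemCvgTn} (the $T_{n,\,n-1}$ versus $T_{n,\,n}$ discrepancy is a single term, negligible after normalization). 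For the second factor, $\big(C_n^{-1/2}(\alpha)\Psi_{n,\,k-1}(\alpha)\veps_k\big)_{1\le k\le n}$ is, for each $n$, a martingale-difference array with respect to $(\cF_{n,\,k})$ from \eqref{Filt}. I would apply a martingale CLT (e.g. the Lindeberg-type array version): the conditional-variance condition reduces exactly to $\sigma^2\,C_n^{-1/2}(\alpha)\tfrac{T_{n,\,n-1}(\alpha)}{n}C_n^{-1/2}(\alpha)\cvgp\sigma^2\Upsilon_p$, again from Lemma~\ref{LemCvgTn}, and the conditional Lindeberg condition follows from $\dE[|\veps_1|^{2+\nu}]<\infty$ together with a uniform bound on $\max_k\|C_n^{-1/2}(\alpha)\Psi_{n,\,k-1}(\alpha)\|/\sqrt n$ in probability — the first coordinate is $O_p(\sqrt{1-\rho_n})\cdot O_p(\sup_k|X_{n,\,k}|)/\sqrt n$ and the remaining coordinates are the stable $V$-terms, all controlled through the estimates behind Lemma~\ref{LemCvgSn}. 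This yields $\cN(0,\sigma^2\,\Upsilon_p^{-1})$.

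\medskip

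\textbf{Step 2: divergence at $\alpha/2<\alpha_0<\alpha$.} Here I would exploit the exact algebraic identity \eqref{DefPsiPoint}, $\Psi_{n,\,k}(\alpha_0)=\Psi_{n,\,k}(\alpha)+c(n^{-\alpha_0}-n^{-\alpha})\dot\Phi_{n,\,k}$, which propagates into the least-squares coefficients. Writing $X_{n,\,k}=\vartheta_n\T(\alpha_0)\Psi_{n,\,k-1}(\alpha_0)+\veps_k$ and repeating the sandwich gives
\begin{equation*}
\whvn(\alpha_0)-\vartheta_n(\alpha_0)=\Big(\tfrac{T_{n,\,n-1}(\alpha_0)}{n}\Big)^{-1}\frac1n\sum_{k=1}^n\Psi_{n,\,k-1}(\alpha_0)\,\veps_k .
\end{equation*}
The denominator, suitably normalized by $C_n(\alpha)$, still converges to $\Upsilon_p$ by Lemma~\ref{LemCvgTn} (which is stated precisely for $T_{n,\,n}(\alpha_0)$, $\alpha/2<\alpha_0\le\alpha$). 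For the numerator, I split $\Psi_{n,\,k-1}(\alpha_0)=\Psi_{n,\,k-1}(\alpha)+c(n^{-\alpha_0}-n^{-\alpha})\dot\Phi_{n,\,k-1}$. The $\Psi(\alpha)$-part, after the $\sqrt n\,C_n^{1/2}(\alpha)$ scaling, is $O_p(1)$ by Step~1. The $\dot\Phi$-part is the key term: its first coordinate vanishes, but its remaining coordinates contain $\sum_k X_{n,\,k-i}\veps_k$, which is of order $\sqrt{n/(1-\rho_n)}=n^{(1+\alpha)/2}/\sqrt c$ (a martingale whose bracket is $\sim\sum X_{n,\,k}^2\sim\sigma^2\ell\, n^{1+\alpha}/c$ by \eqref{CvgTn11}). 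After the scaling, the corresponding coordinates of $\sqrt n\,C_n^{1/2}(\alpha_0)(\whvn(\alpha_0)-\vartheta_n(\alpha_0))$ acquire a factor of order $(n^{-\alpha_0}-n^{-\alpha})\cdot\tfrac1n\cdot n^{(1+\alpha)/2}\cdot\sqrt n=n^{(1+\alpha)/2-\alpha_0}$ (using $C_n^{1/2}(\alpha_0)\asymp C_n^{1/2}(\alpha)$ on those coordinates and absorbing the first-coordinate normalization), and $n^{(1+\alpha)/2-\alpha_0}\to+\infty$ precisely because $\alpha_0<(1+\alpha)/2$, which is implied by $\alpha_0<\alpha<1$. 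One still has to check that this deterministic blow-up is not cancelled by the random part: the martingale $\sum_k\dot\Phi_{n,\,k-1}\veps_k$, once divided by its own order $n^{(1+\alpha)/2}$, converges to a nondegenerate limit (conditional variance $\to\sigma^2\Sigma_{p-1}$ on the relevant block), so the product stays bounded away from $0$ in probability. Hence the norm diverges to $+\infty$ in probability.

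\medskip

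\textbf{Main obstacle.} The delicate point is Step~2: one must keep careful track of how the two rate matrices $C_n(\alpha)$ and $C_n(\alpha_0)$ interact, since the divergence comes from a mismatch between the normalization imposed in the statement ($C_n^{1/2}(\alpha_0)$) and the intrinsic scale of the error. The cleanest route is to establish the exact order of each block of $\sqrt n\,C_n^{1/2}(\alpha_0)(\whvn(\alpha_0)-\vartheta_n(\alpha_0))$ rather than just an upper bound, using the already-available limits \eqref{CvgTn11}, \eqref{CvgTn22}, \eqref{CvgTn21} and Lemmas~\ref{LemCvgSn}--\ref{LemCvgTn}, so that the lower bound $n^{(1+\alpha)/2-\alpha_0}\to+\infty$ is genuinely attained on at least one coordinate. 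The condition $\alpha_0>\alpha/2$ enters exactly where it does in the proof of Lemma~\ref{LemCvgTn}, namely to guarantee that the cross terms $R_{n,\,1},R_{n,\,2}$ remain negligible so that the denominator behaves as $\Upsilon_p$; that is why the restriction propagates verbatim to this lemma.
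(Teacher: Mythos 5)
Your Step 1 follows the paper's own route almost verbatim: write the error as $T_{n,\,n-1}^{\,-1}(\alpha)\sum_k\Psi_{n,\,k-1}(\alpha)\veps_k$, identify a martingale-difference array, get the conditional variance from Lemma \ref{LemCvgTn}, and check the conditional Lindeberg condition using the $(2+\nu)$-moment (the paper does this via H\"older and Markov plus Cram\'er--Wold). That part is sound.

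Step 2 has a genuine gap. The identity $X_{n,\,k}=\vartheta_n\T(\alpha_0)\Psi_{n,\,k-1}(\alpha_0)+\veps_k$, on which your whole second step rests, is \emph{false} for $\alpha_0\neq\alpha$: the representation \eqref{ModVarTheta} is exact only at the true exponent, since it comes from factoring the autoregressive polynomial by $(1-\lambda_{n,\,1}z)$ with $\lambda_{n,\,1}=\rho_n(\alpha)$. Regressing on $\Psi(\alpha_0)$ is a misspecified regression whose residual is $\veps_k$ plus a bias of order $c(n^{-\alpha_0}-n^{-\alpha})\big(X_{n,\,k-1}-\sum_i\beta_{n,\,i}X_{n,\,k-i-1}\big)$, and dropping that bias is fatal because it is precisely the source of the divergence. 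The paper's decomposition \eqref{DecompVartheta} keeps three terms $\tilde R_{n,\,1}+\tilde R_{n,\,2}+\tilde R_{n,\,3}$: the martingale term $\tilde R_{n,\,2}$ (the only one you retain) is actually $o_p(1)$ on the first coordinate after the $\sqrt{n}\,C_n^{1/2}(\alpha_0)$ normalization, while the deterministic part contributes $\sqrt{c\,n^{1-\alpha_0}}\,(1-n^{\alpha_0-\alpha})\,(1-u_n)$ on the \emph{first} coordinate, with $1-u_n\cvgp\Theta_\beta(1)\neq0$ by \eqref{CvgUn} and stability of the $V$-process (a cancellation between $\tilde R_{n,\,1}$ and $\tilde R_{n,\,3}$ that must be ruled out, and which your sketch never addresses). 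The divergence is thus at rate $n^{(1-\alpha_0)/2}$ on the first coordinate, not on the $\dot\Phi$-block.

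Moreover, the term you single out as ``the key term'' does not diverge even on your own accounting: the product $(n^{-\alpha_0}-n^{-\alpha})\cdot n^{-1}\cdot n^{(1+\alpha)/2}\cdot\sqrt{n}$ equals $n^{\alpha/2-\alpha_0}$, not $n^{(1+\alpha)/2-\alpha_0}$, and it tends to $0$ under the standing assumption $\alpha_0>\alpha/2$ (this is exactly why those cross terms are negligible in Lemma \ref{LemCvgTn}). So the blow-up you announce is not there, and the argument as written would wrongly conclude that $\sqrt{n}\,C_n^{1/2}(\alpha_0)\big(\whvn(\alpha_0)-\vartheta_n(\alpha_0)\big)$ stays tight.
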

\begin{proof}
Developing $X_{n,\, k} = \vartheta_n\T(\alpha) \Psi_{n,\, k-1}(\alpha) + \veps_k$ in \eqref{OLS2}, the estimation error satisfies 
\begin{equation}
\label{DecomptEstErr}
\sqrt{n} \, C_n^{\frac{1}{2}}(\alpha) \left( \whvn(\alpha) - \vartheta_n(\alpha) \right) = \sqrt{n} \, C_n^{\frac{1}{2}}(\alpha)\, T_{n,\, n-1}^{\, -1}(\alpha) \sum_{k=1}^{n} \Psi_{n,\, k-1}(\alpha)\, \veps_k.
\end{equation}
The sequence $(\Psi_{n,\, k-1}(\alpha)\, \veps_k)_{n,\, k}$ is a martingale-difference array w.r.t. $\cF_{n,\, k}$ defined in \eqref{Filt} so, for all $u \in \dR^p\backslash\{0\}$,
\begin{equation}
\label{Mart}
M_{n,\, n}^{(u)} = u\T\, \frac{C_n^{-\frac{1}{2}}(\alpha)}{\sqrt{n}} \sum_{k=1}^n \Psi_{n,\, k-1}(\alpha)\, \veps_k
\end{equation}
is a martingale array having predictable quadratic variation
\begin{eqnarray}
\label{CvgCroch}
\langle M^{(u)} \rangle_{n,\, n} & = & \sigma^2\, u\T C_n^{-\frac{1}{2}}(\alpha)\, \frac{T_{n,\, n-1}(\alpha)}{n}\, C_n^{-\frac{1}{2}}(\alpha)\, u \nonumber \\
 & \cvgp & \sigma^2\, u\T \Upsilon_p\, u > 0
\end{eqnarray}
by the convergence in Lemma \ref{LemCvgTn} (with $\alpha_0 = \alpha$) and the positive definiteness of $\Upsilon_p$. It is now necessary to establish the Lindeberg's condition in order to apply the central limit theorem for arrays of martingales (see \textit{e.g.} Thm. 1 of Sec. 8 of \citet{Pollard84}), namely that
\begin{equation}
\label{Lindeberg}
\forall\, \epsilon > 0, \hsp \sum_{k=1}^n \dE\left[ (\Delta M_{n,\, k}^{(u)} )^2\, \ind_{\{ \vert \Delta M_{n,\, k}^{(u)} \vert\, >\, \epsilon \}}\, \vert\, \cF_{n,\, k-1} \right] \cvgp 0
\end{equation}
where $\Delta M_{n,\, k}^{(u)} = M_{n,\, k}^{(u)} - M_{n,\, k-1}^{(u)}$. But for any $1 \leq k \leq n$,
\begin{equation}
\label{LindSup}
\dE\big[ (\Delta M_{n,\, k}^{(u)} )^2\, \ind_{ \{ \vert \Delta M_{n,\, k}^{(u)} \vert\, >\, \epsilon \} }\, \vert\, \cF_{n,\, k-1} \big] ~=~ \varphi^{(u)}_{n,\, k-1}(\alpha)\, \frac{\xi_{n,\, k}}{n} ~\leq~ \varphi^{(u)}_{n,\, k-1}(\alpha)\, \frac{\xi^*_n}{n}
\end{equation}
where $\varphi^{(u)}_{n,\, k-1}(\alpha) = u\T C_n^{-1/2}(\alpha)\, \Psi_{n,\, k-1}(\alpha)\, \Psi\T_{n,\, k-1}(\alpha)\, C_n^{-1/2}(\alpha)\, u$,
\begin{equation*}
\xi_{n,\, k} = \dE\big[ \veps_k^{\, 2}\, \ind_{ \{ \vert \Delta M_{n,\, k}^{(u)} \vert\, >\, \epsilon \} }\, \vert\, \cF_{n,\, k-1} \big] \hsp \text{and} \hsp \xi^*_n = \sup_{1\, \leq\, k\, \leq\, n} \xi_{n,\, k}.
\end{equation*}
Applying H\"older's and Markov's inequalities,
\begin{eqnarray*}
\forall\, 1 \leq k \leq n, \hsp \xi_{n,\, k}  & \leq & \eta_{\nu}^{\frac{2}{2+\nu}}\, \dP\big( (\Delta M_{n,\, k}^{(u)})^2 > \epsilon^2\, \vert\, \cF_{n,\, k-1} \big)^{\frac{\nu}{2+\nu}} \\
 & \leq & \eta_{\nu}^{\frac{2}{2+\nu}} \left( \frac{\sigma^2\, \varphi^{(u)}_{n,\, k-1}(\alpha)}{\epsilon^2\, n} \right)^{\! \frac{\nu}{2+\nu}} \cvgp 0
\end{eqnarray*}
from Lemma \ref{LemCvgTn} (with $\alpha_0 = \alpha$), where $\eta_{\nu}$ is the moment of order $2+\nu$ of the white noise $(\veps_k)$. Thus $\xi^*_n = o_p(1)$, see \textit{e.g.} Lem. 1.3.20 of \citet{Duflo97}. Together with \eqref{LindSup} and again Lemma \ref{LemCvgTn}, \eqref{Lindeberg} follows. Consequently, the martingale defined in \eqref{Mart} satisfies
\begin{equation*}
M_{n,\, n}^{(u)} \cvgd \cN(0,\, \sigma^2\, u\T \Upsilon_p\, u)
\end{equation*}
and because that holds for any $u \neq 0$, by the Cram\'er-Wold device,
\begin{equation}
\label{CvgMart}
\frac{C_n^{-\frac{1}{2}}(\alpha)}{\sqrt{n}} \sum_{k=1}^n \Psi_{n,\, k-1}(\alpha)\, \veps_k \cvgd \cN\big( 0,\, \sigma^2\, \Upsilon_p ).
\end{equation}
Combined with \eqref{CvgCroch}, Slutsky's lemma and \eqref{DecomptEstErr},
\begin{equation*}
\sqrt{n} \, C_n^{\frac{1}{2}}(\alpha) \left( \whvn(\alpha) - \vartheta_n(\alpha) \right) \cvgd \cN\big( 0,\, \sigma^2\, \Upsilon_p^{-1} \big).
\end{equation*}
That concludes the proof of the first part of the lemma. For the other part, note that using \eqref{ModVarTheta}, \eqref{DefPsiPoint} and \eqref{OLS2}, we have the decomposition
\begin{align}
\label{DecompVartheta}
\whvn(\alpha_0) - \vartheta_n(\alpha_0) &= \left[\vartheta_n(\alpha) - \vartheta_n(\alpha_0) \right] + T_{n,\, n-1}^{\, -1}(\alpha_0) \sum_{k=1}^{n} \Psi_{n,\, k-1}(\alpha_0) \, \veps_k \\
& \hsp \hsp \hsp +~ c \left(n^{-\alpha} - n^{-\alpha_0} \right) T_{n,\, n-1}^{\, -1}(\alpha_0) \sum_{k=1}^{n} \Psi_{n,\, k-1}(\alpha_0)\, \dot{\Phi}_{n,\, k-1}\T \, \vartheta_n(\alpha) \nonumber \\
& \eqdef \tilde{R}_{n,\,1} + \tilde{R}_{n,\,2} + \tilde{R}_{n,\,3}. \nonumber
\end{align}
The reasoning we have just developed also applies to $\tilde{R}_{n,\, 2}$ \textit{via} the same lines and tools, and as a result we obtain the convergence
\begin{equation}
\label{DecompCvgR2}
\sqrt{n} \, C_n^{\frac{1}{2}}(\alpha) \, \tilde{R}_{n,\,2} \cvgd \cN\big( 0,\,\sigma^2 \, \Upsilon_p^{-1} \big).
\end{equation}
That leads to the first intermediate step,
\begin{equation}
\label{CvgRn2}
\left\vert e_p\T \sqrt{n} \, C_n^{\frac{1}{2}}(\alpha_0) \, \tilde{R}_{n,\,2} \right\vert ~=~ \left\vert \sqrt{c \, n^{\alpha_0 - \alpha}} \, e_p\T \, \sqrt{n} \, C_n^{\frac{1}{2}}(\alpha) \, \tilde{R}_{n,\,2} \right\vert ~=~ o_p(1)
\end{equation}
as soon as $\alpha_0 < \alpha$. We can consider the rest of the decomposition similarly, that is
\begin{equation}
\label{DecompRn13}
\left\vert e_p\T \sqrt{n} \, C_n^{\frac{1}{2}}(\alpha_0) \left(\tilde{R}_{n,\,1} + \tilde{R}_{n,\,3}\right) \right\vert = \left\vert \, \sqrt{c \, n^{1-\alpha_0}} \left(1 - n^{\alpha_0 - \alpha}\right) \right\vert \vert 1 - u_n \vert
\end{equation}
where the first factor is clearly divergent when $\alpha_0 < \alpha$ and
\begin{equation*}
    u_n \eqdef e_p\T \, T_{n,\, n-1}^{\, -1}(\alpha_0) \left[ \sum_{k=1}^{n} \Psi_{n,\, k-1}(\alpha_0)\, \dot{\Phi}_{n,\, k-1}\T \right] \vartheta_n(\alpha).
\end{equation*}
As in the proof of Lemma \ref{LemCvgTn}, it is easy to see that $\Psi_{n,\, k-1}(\alpha_0)\, \dot{\Phi}_{n,\, k-1}\T$ is a $p \times p$ matrix made of a first column of zeros, terms of the form $X_{n,\, k-1}\, X_{n,\, k-j}$ on the first line and $V_{n,\, k-i}(\alpha_0)\, X_{n,\, k-j}$ in the bottom-right block $(1 \leq i \leq p-1,\, 2 \leq j \leq p)$, broken down as
\begin{eqnarray*}
\rho_{n}^{2j}(\alpha) \sum_{k=1}^n V_{n,\, k-i}(\alpha_0) \, X_{n,\,k-j}  & = & \rho_{n}^{j+1}(\alpha) \sum_{k=1}^n V_{n,\, k-i}(\alpha_0) \, X_{n,\,k-1} \\
 & & \hsp \hsp \hsp -~ \sum_{s=1}^{j-1} \rho_{n}^{2j-s}(\alpha) \sum_{k=1}^n V_{n,\, k-i}(\alpha_0) \, V_{n,\,k-j+s}(\alpha)
\end{eqnarray*}
as it is done in \eqref{eq_XV}. As soon as $\alpha/2 < \alpha_0 < \alpha$, Lemma \ref{LemCvgTn} entails that the first part of this equation is $o_p(n^{1+\alpha/2})$ and, using Cauchy–Schwarz inequality,
\begin{equation*}
\left\vert\sum_{k=1}^n V_{n,\, k-i}(\alpha_0) \, V_{n,\,k-j+l}(\alpha) \right\vert 
~\leq~ \sqrt{\sum_{k=1}^n V^{\, 2}_{n,\, k-i}(\alpha_0)} \, \sqrt{\sum_{k=1}^n V^{\, 2}_{n,\,k-j+l}(\alpha)}
\end{equation*}
where both sums are $O_p(n)$. Finally, it is straightforward to obtain the simplification
\begin{equation}
\label{CvgUn}
    u_n \cvgp e_p\T \, \Upsilon_p^{-1} \, \sigma^2 \, \ell \, \begin{pmatrix}
    0 & 1 & \hdots & 1 \\
    \vdots & 0 & \hdots & 0 \\
    \vdots & \vdots & & \vdots \\
    0 & 0 & \hdots & 0 \\
    \end{pmatrix} \, \begin{pmatrix}
    1 \\
    \vdots \\
    \beta_i \\
    \vdots \\
    \end{pmatrix} ~=~ \sum_{i=1}^{p-1} \beta_i \eqdef s_{\beta}
\end{equation}
where $\ell$ and $\Upsilon$ are defined in Lemmas \ref{LemCvgSn} and \ref{LemCvgTn}, respectively. Note that $1 - s_{\beta}$ is obviously non-zero since by stability, the polynomial $\Theta_\beta(z) = 1 - \beta_1 z - \ldots - \beta_{p-1} z^{p-1}$ has no zero on the unit circle. Going back to \eqref{CvgRn2} and \eqref{DecompRn13}, we then have, for any $\alpha/2 < \alpha_0 < \alpha$,
\begin{equation}
\label{DivVarthetAlpha0}
    \left\vert e_p\T \sqrt{n} \, C_n^{\frac{1}{2}}(\alpha_0) \left( \whvn(\alpha_0) - \vartheta_n(\alpha_0) \right) \right\vert \cvgp +\infty
\end{equation}
which concludes the proof.
\end{proof}

\subsection{Proof of Proposition \ref{PropCvgNormOLS3}.}
\label{SecProofCorOLS3}
First, note that
\begin{eqnarray*}
\sqrt{n}\, \big( \wtn - \theta_n \big) & = & \sqrt{n}\, \big( \wh{J}_n - J_n \big)\, \big( \wh{\beta}_n - \beta_n \big) + \sqrt{n}\, \big( \wh{J}_n - J_n \big)\, \beta_n \\
 & & \hsp \hsp \hsp +~ \sqrt{n}\, J_n\, \big( \wh{\beta}_n - \beta_n \big) + \sqrt{n}\, \big( \wh{v}_n - \lambda_{n,\, 1} \big)\, e_p.
\end{eqnarray*}
In fact, the proof rests entirely on Lemma \ref{LemCvgVarTheta}. The first line of the asymptotic normality gives $\vert \wh{v}_n - \lambda_{n,\, 1} \vert = O_p(n^{-(1+\alpha)/2}) = o_p(n^{-1/2})$ and obviously $\Vert \wh{J}_n - J_n \Vert = o_p(n^{-1/2})$. Now from the remaining part of \eqref{NormVarTheta},
\begin{equation*}
\sqrt{n}\, \big( \wh{\beta}_n - \beta_n \big) \cvgd \cN\big( 0,\, \Sigma_{p-1}^{-1} \big)
\end{equation*}
where the (invertible) covariance $\Sigma_{p-1}$ is given in \eqref{DefSigma}. So $\Vert \wh{J}_n - J_n \Vert\, \Vert \wh{\beta}_n - \beta_n \Vert = o_p(n^{-1/2})$ and the result follows from \ref{HypCA}, that is from $J_n \rightarrow J$. \qed

\subsection{Proof of Theorem \ref{ThmNormAlpha}.}
\label{SecProofThmAlpha}

Taking the first line of the convergence in Lemma \ref{LemCvgVarTheta},
\begin{equation}
\label{eq_NormVtet1}
	\sqrt{n^{1+\alpha}} \, \big( \wh{v}_n(\alpha) - \rho_n(\alpha) \big) \cvgd \cN\big( 0,\, c\, \ell^{-1} \big)
\end{equation}
where we recall that $\wh{v}_n(\alpha)$ is the first component of $\whvn(\alpha)$, $\rho_n(\alpha) = 1 - c\,n^{-\alpha}$ and $\ell > 0$ is given in Lemma \ref{LemCvgSn}. Now define the sequence of $\mathcal{C}^1$-mappings from $]0,\, 1[$ to $\dR$ given by
\begin{equation*}
\forall\, n > 1, \hsp g_n(x) = \frac{\ln c - \ln\left( 1 - x \right)}{\ln n}
\end{equation*}
so that $\alpha = g_n(\rho_n(\alpha))$ and $\wh{\alpha}_n(\alpha) = g_n(\wh{v}_n(\alpha))$. Using Taylor's expansion, there is a sequence $(\vartheta^*_{n})$ such that
\begin{equation*}
\wh{\alpha}_n(\alpha) = \alpha + \big( \wh{v}_n(\alpha) - \rho_n(\alpha) \big)\, g_n^{\prime}(\vartheta^*_{n})
\end{equation*}
with, for all $n > 1$, $(\rho_n(\alpha) \wedge \wh{v}_n(\alpha)) < \vartheta^*_{n} < (\rho_n(\alpha) \vee \wh{v}_n(\alpha))$. So,
\begin{equation}
\label{eq_DecompAlpha}
		(\ln n)\, \sqrt{n^{1-\alpha}}\, \left( \wh{\alpha}_n(\alpha) - \alpha \right) =  \sqrt{n^{1+\alpha}}\,\big( \wh{v}_n(\alpha) - \rho_n(\alpha) \big) \, \frac{n^{-\alpha}}{1-\vartheta^*_{n}}.
\end{equation}
We also note that
\begin{equation*}
n^{\alpha}\, \big( 1 - \rho_n(\alpha) \vee \wh{v}_n(\alpha) \big) ~<~  n^{\alpha}\, \big(1 - \vartheta^*_{n} \big) ~<~ n^{\alpha}\, \big( 1- \rho_n(\alpha) \wedge \wh{v}_n(\alpha) \big)
\end{equation*}
and
\begin{eqnarray*}
n^{\alpha}\, \big( 1 - \wh{v}_n(\alpha) \big) & = & n^{\alpha}\, \big( 1 - \rho_n(\alpha) \big) + n^{\alpha}\, \big( \rho_n(\alpha) - \wh{v}_n(\alpha) \big) \\
 & = & c + O_p\big( \sqrt{n^{\alpha-1}} \big) \cvgp c
\end{eqnarray*}
from Lemma \ref{LemCvgVarTheta} and since $\alpha < 1$. That clearly implies that
\begin{equation}
\label{eq_cvgVtetStar}
n^{\alpha}\, \big(1 - \vartheta^*_{n} \big) \cvgp c.
\end{equation}
The result follows from \eqref{eq_NormVtet1}, \eqref{eq_DecompAlpha} and \eqref{eq_cvgVtetStar} together with Slutsky's lemma. \qed

\subsection{Proof of Theorem \ref{ThmTest}.}
\label{SecProofThmTest}
First, a direct computation of the entries of $P^{-1}$ shows that
\begin{equation*}
    \pi_{11} = \frac{1}{\prod_{j=2}^p (1 - \lambda_j)} \hsp (= 1 ~\text{if $p=1$}),
\end{equation*}
see \eqref{DefPi11} with $\lambda_1 = 1$. We know by Rem. 2.2 in \citet{BadreauProia2023} that the OLS of $\theta_n$ defined in \eqref{OLS} is consistent for $\theta$, so obviously
\begin{equation*}
\wh{A}_n \cvgp A
\end{equation*}
also holds. Since the eigenvalues of a square matrix are continuous functions of its entries, see e.g. Thm. 2.4.9.2 of \citet{HornJohnson92}, it follows by the continuous mapping theorem that
\begin{equation*}
\wh{\pi}_n \cvgp \pi_{11}.
\end{equation*}
Then, taking for $\alpha_0$ the `true' value $\alpha$ and using Slutsky's lemma, \eqref{CvgZnH0} is a straightforward consequence of Theorem \ref{ThmNormAlpha}. Now suppose that $\alpha/2 < \alpha_0 < \alpha$. Coming back to \eqref{eq_DecompAlpha} with $\alpha_0$, through the reasoning of the previous proof, there is a sequence $(\vartheta^{**}_{n})$ such that
\begin{equation}
\label{eq_DecompAlpha0}
		(\ln n)\, \sqrt{n^{1-\alpha_0}}\, \left( \wh{\alpha}_n(\alpha_0) - \alpha_0 \right) = \sqrt{n^{1+\alpha_0}}\, \big( \wh{v}_n(\alpha_0) - \rho_n(\alpha_0) \big)\, \frac{n^{-\alpha_0}}{1-\vartheta^{**}_{n}}
\end{equation}
where $(\rho_n(\alpha_0) \wedge \wh{v}_n(\alpha_0)) < \vartheta^{**}_{n} < (\rho_n(\alpha_0) \vee \wh{v}_n(\alpha_0))$. Using again the decomposition \eqref{DecompVartheta}, we first obtain with \eqref{DecompCvgR2},
\begin{equation*}
\big\vert n^{\alpha_0} \, e_p\T \tilde{R}_{n,\,2} \big\vert = O_p\big( \sqrt{n^{2 \alpha_0 - 1 - \alpha}} \big) \cvgp 0
\end{equation*}
since $2 \alpha_0 < \alpha + 1$. Then,
\begin{equation*}
n^{\alpha_0} \, e_p\T \left(\tilde{R}_{n,\,1} + \tilde{R}_{n,\,3}\right) = c \left(1 - n^{\alpha_0 - \alpha}\right) (1 - u_n) \cvgp c\, ( 1 - s_{\beta})
\end{equation*}
where $u_n$ has been introduced in \eqref{DecompRn13} and its limit behavior established in \eqref{CvgUn}. Hence,
\begin{equation*}
    n^{\alpha_0}\, \big( 1 - \wh{v}_n(\alpha_0) \big) = c + n^{\alpha_0}\, \big( \rho_n(\alpha_0) - \wh{v}_n(\alpha_0) \big) \cvgp c\, s_{\beta}.
\end{equation*}
This is sufficient to get that, for any $\alpha/2 < \alpha_0 < \alpha$,
\begin{equation}
\label{eq_DivVthetStar}
\liminf_{n\, \rightarrow\, +\infty}\, \frac{n^{-\alpha_0}}{\vert 1-\vartheta^{**}_{n} \vert} ~\geq~ \frac{1}{c\, (1 \vee \vert s_{\beta} \vert)} ~>~ 0.
\end{equation}
Finally, \eqref{DivVarthetAlpha0} entails
\begin{equation}
\label{eq_DivVtet1Alpha0}
	\left\vert \sqrt{n^{1+\alpha_0}} \, \big(\wh{v}_n(\alpha_0) - \rho_n(\alpha_0) \big)\right\vert \cvgp + \infty.
\end{equation}
The combination of \eqref{eq_DecompAlpha0}, \eqref{eq_DivVthetStar} and \eqref{eq_DivVtet1Alpha0} leads to \eqref{CvgZnH1}, which concludes the proof.
\qed

\subsection{The case of a negative unit root}
\label{SecNegRoot}
Our results have all been established for $\lambda_1 = 1$ but they remain true in a context of a negative unit root, at the cost of some slight adjustments. In what follows, it is thus implied that $\lambda_1 = -1$ so that we may now consider, $\lambda_{n,\, 1} = \lambda_1\, \rho_n(\alpha) = -\rho_n(\alpha)$. \correc{The immediate consequence is that, in the definition of the latent variables $V_{n,\, k}$ in \eqref{ModHiera}, $V_{n,\, k}(\alpha) = X_{n,\, k} + \rho_n(\alpha)\, X_{n,\, k-1}$.} So, in the proof of Lemma \ref{LemCvgTn}, $\rho_n(\alpha)$ must be replaced by $-\rho_n(\alpha)$ in \eqref{DefNonDiagBlocks} and \eqref{eq_XV}, and in particular \correc{that leads to
\begin{eqnarray*}
C_n^{\frac{1}{2}}(\alpha)\, P_n\T \left( I_p + \rho_n(\alpha) A_n\T \right) & = & C_n^{\frac{1}{2}}(\alpha) \left( I_p + \rho_n(\alpha) D_n \right) P_n\T \\
 & \cvgp & \diag\left( 0, (1+\lambda_i)_{2\, \leq\, i\, \leq\, p} \right) P\T \eqdef \Delta_p\, P\T
\end{eqnarray*}
in place of \eqref{DefDiagDelta}.} We may also note that $c$ merely becomes $-c$ in \eqref{DefPsiPoint}, \eqref{DecompTn} and \eqref{DecompVartheta}. The sequence $(u_n)$ defined in \eqref{DecompRn13} \correc{now satisfies 
\begin{equation*}
    u_n \cvgp e_p\T \, \Upsilon_p^{-1} \, \sigma^2 \, \ell \, \begin{pmatrix}
    0 & -1 & \hdots & (-1)^{p-1} \\
    \vdots & 0 & \hdots & 0 \\
    \vdots & \vdots & & \vdots \\
    0 & 0 & \hdots & 0 \\
    \end{pmatrix} \, \begin{pmatrix}
    -1 \\
    \vdots \\
    \beta_i \\
    \vdots \\
    \end{pmatrix} ~=~ \sum_{i=1}^{p-1} (-1)^i \beta_i \eqdef s_{\beta}
\end{equation*}
where $1 - s_\beta = \Theta_\beta(-1) \neq 0$.} It is interesting to note that, for $\lambda_1 = \pm 1$, we have in general terms
\begin{equation*}
1 - u_n \cvgp \Theta_\beta(\lambda_1)
\end{equation*}
where $\Theta_\beta(z) = 1 - \beta_1 z - \ldots - \beta_{p-1} z^{p-1}$ has no zero on the unit circle, by stability: the sign of the asymptotic behavior of $Z_n$ under $\cH_1$ thus depends on the sign of $\Theta_\beta(\lambda_1) \neq 0$. Finally, by considering the $\mathcal{C}^1$-mappings from $]\!-1,\, 0[$ to $\dR$ given by
\begin{equation*}
\forall\, n > 1, \hsp g_n(x) = \frac{\ln c - \ln\left( 1 + x \right)}{\ln n},
\end{equation*}
one can see that $\alpha = g_n(-\rho_n(\alpha))$ and $\wh{\alpha}_n(\alpha) = g_n(\wh{v}_n(\alpha))$ as defined in \eqref{EstAlpha}. Theorems \ref{ThmNormAlpha} and \ref{ThmTest} easily follow.

\bigskip

\noindent \textbf{Acknowledgements}. \correc{The authors warmly thank the Associate Editor and the two anonymous Reviewers for their suggestions and constructive comments which helped to improve this work.} This research benefited from the support of the ANR project `Efficient inference for large and high-frequency data' (ANR-21-CE40-0021).

\bibliographystyle{plainnat}
\bibliography{QUnitRoot}

\end{document}